\newtheorem{theorem}{Theorem}[section]
\newtheorem{corollary}[theorem]{Corollary}
\newtheorem{lemma}[theorem]{Lemma}
\theoremstyle{definition}
\newtheorem{remark}[theorem]{Remark}
\newcommand{\cupdot}{\mathbin{\mathaccent\cdot\cup}}
\DeclareMathOperator{\argmin}{arg\,min}
\DeclareSymbolFont{bbsymbol}{U}{bbold}{m}{n}
\DeclareMathSymbol{\ind}{\mathbin}{bbsymbol}{'061}
\title[Estimation of conditional inequality curves and measures]{Estimation of conditional inequality curves and measures via estimating the conditional quantile function}
\author[A{.} Jokiel-Rokita]{Alicja Jokiel-Rokita}
\author[S{.} Pi\c{a}tek]{Sylwester Pi\c{a}tek}
\author[R{.} Topolnicki]{Rafa{\l} Topolnicki}
\keywords{inequality measure, quantile regression, isotonic regression, conditional quantile function, income inequality}
\subjclass[2020]{Primary 62G05; Secondary 62P20, 62G20}
\thanks{The work of S.P. was supported by the project Minigrants for doctoral students of the Wrocław University of Science and Technology.}
\address[A.J.-R. and S.P.]{Faculty of Pure and Applied Mathematics\\ Wroc{\l}aw University
	of Science and Technology\\
	Wybrze\.ze Wyspia\'nskiego 27,
	50-370 Wroc{\l}aw, Poland
}
\address[R.T.]{Institute of Mathematics\\
Polish Academy of Sciences\\
	\'{S}niadeckich 8,
	00-656 Warsaw, Poland
}
\email{alicja.jokiel-rokita@pwr.edu.pl}
\email{sylwester.piatek@pwr.edu.pl}
\email{rafal.topolnicki@impan.pl}
\begin{document}

\begin{abstract}
The classical concept of inequality curves and measures is extended to conditional inequality curves and measures and a curve of conditional inequality measures is introduced.
This extension provides a~more nuanced analysis of inequality in relation to covariates.
In particular, this enables comparison of inequalities between subpopulations, conditioned on certain values of covariates.
To estimate the curves and measures, a~novel method for estimating the conditional quantile function is proposed. 
The method incorporates a~modified quantile regression framework that employs isotonic regression to ensure that there is no quantile crossing. 
The consistency of the proposed estimators is proved while their finite sample performance is evaluated through simulation studies and compared with existing quantile regression approaches. 
Finally, practical application is demonstrated by analysing salary inequality across different employee age groups, highlighting the potential of conditional inequality measures in empirical research. 
The code used to prepare the results presented in this article is
available in a~dedicated GitHub repository.
\end{abstract}

\maketitle

\section{Introduction}\label{sec:intro}

The most commonly used measure of inequality, applied, for example, in the analysis of income inequality or in classification issues is the Gini coefficient \cite{Gini1912}, based on the Lorenz curve \cite{Lorenz1905}.
Since the Lorenz curve and the Gini index have some limitations (they are well defined only if the expected value of the observed random variable is finite), 
some alternative curves and indices have been proposed in recent years (see, for example, \cite{Prendergast2016} or \cite{Brazauskas2024}).
In this paper we focus on two quantile inequality curves $qZ$ and $qD$ as well as the inequality measures associated with them, that is, $qZI$ and $qDI$.
These inequality curves and measures were proposed in 2018 by Prendegast and Staudte \cite{Prendergast2018}.
The problem of their estimation was further investigated by Jokiel-Rokita and Piątek \cite{JokPia2023} and Brazauskas et al. \cite{Brazauskas2024}.
These inequality curves and measures can be defined as follows.

Let $Y$ be a~nonnegative random variable with cumulative distribution function (cdf) $F(t)=P(Y\leq t).$
Denote
\begin{equation*}
Q(p)=F^{-1}(p)=\inf\{t\in\mathbb{R}:F(t)\geq p\},
\end{equation*}
the quantile function (qf) of $Y,$ where $p\in(0,1).$
The curves $qZ$ and $qD$ are defined by quantiles ratios as follows
\begin{equation}\label{e:qZ}
qZ(p;Q)=1-\frac{Q(p/2)}{Q(1/2+p/2)},
\end{equation}
\begin{equation}\label{e:qD}
qD(p;Q)=1-\frac{Q(p/2)}{Q(1-p/2)},
\end{equation}
for $p\in(0,1),$ and $qZ(0;Q)=qZ(1;Q)=qD(0;Q)=1,$ $qD(1;Q)=0.$
A quantile-based counterparts to the Gini index based on $qZ$ and $qD$ are the indices $qZI$ and $qDI$ defined as the area under $qZ$ and $qD$ curves respectively, i.e.,
\begin{equation}\label{e:qZI}
qZI(Q)=\int_{0}^{1}qZ(p;Q)\,dp,
\end{equation}
\begin{equation}\label{e:qDI}
qDI(Q)=\int_{0}^{1}qD(p;Q)\,dp.
\end{equation}
Suppose now that the random variable $Y$ is associated with a~random variable $X$ (possible multivariate) of some covariates,
and we are interested in the inequality curves and measures corresponding to the conditional distribution $Y|X=x,$ for $x\in \mathcal{X},$ where $\mathcal{X}$ is the value space of $X.$
Let $Q_{x}$ be the quantile function of  $Y|X=x.$
The conditional inequality curves and measures can be obtained by replacing $Q$ with $Q_{x}$ in formulas (\ref{e:qZ}), (\ref{e:qD}) and (\ref{e:qZI}), (\ref{e:qDI}), respectively.
Moreover, for example, when $x\in{\mathbb R}$ or $Q_x$ depends on the value of a~real function of a~vector $x$ of covariates, 
we can obtain the curves, for example $\{(x,qZI(Q_{x})\colon x\in \mathcal{X}\},$ that illustrate the relationship between conditional inequality measures and values of the covariates.

In the literature, the Gini indices within some subpopulations, e.g., the population divided by sex
(see, for example, \cite{Costa2023} or \cite{Larraz2014}) or by regions \cite{Jedrzejczak2014} or by various others \cite{Firpo2009} were considered and can be treated as a type of the conditional Gini coefficients.
However, these conditional measures are useful in investigating the characteristics of the relation between, for example, income inequality and some nominal covariates, 
but it cannot be applied to describe how inequality changes with increasing age or work experience of workers.

The idea of analysing the relation between quantiles of the distribution of the incomes and some covariates (e.g., age) is the subject of interest, see, for example, \cite{Buchinsky1994}, \cite{Chamberlain1994}, \cite{Gosling2000}, \cite{Santos2012} or \cite{Kutzker2024}.
Quantile regression (QR) was applied there to estimate conditional quantiles of income for a~certain set of covariates.
It was often used only to estimate single lines of quantile regression, i.e. median or $0.1$th and $0.9$th quantiles.
This paper covers the problem of estimation of the conditional quantile function of the observed random variables depending on some covariates. 
This was also done using quantile regression (QR), for example, by Bassett and Koenker \cite{Bassett1982}.
Their approach was used by Machado and Mata \cite{Machado2005} in their analysis of the distribution of personal income.
Our approach to the estimation of the conditional quantile function is different from that of Bassett and Koenker, since we impose monotonicity of its estimator for each combination of covariates.
Moreover, we propose an alternative way to estimate the coefficients of quantile regression using approximations.
The conditional quantile function has multiple applications in many fields, hence we believe that the novel approach to the problem of its estimation may be useful in many purposes.
In addition to econometrics, it is useful, among others, in functional data analysis \cite{Zhu2023}, big data analysis \cite{Volgushev2017}, 
risk management (\cite{Belloni2016}, \cite{Scaillet2005}, \cite{Chen2007}), time series analysis \cite{Xiao2009}, actuarial science \cite{Laporta2024} or medicine \cite{Ma2019}.
Moreover, it is important in addressing problems in areas related to machine learning and artificial intelligence, such as reinforcement learning \cite{Dabney2017}, uncertainty estimation \cite{Si2022} or content generation \cite{Si2022}.

The paper is organised as follows.
Section \ref{sec:measures} describes various methods of estimating conditional inequality curves and measures. 
Section \ref{sec:properties} contains proofs of the asymptotic properties of the estimators proposed in Section \ref{sec:measures}.
The description and results of the simulation study carried out to compare the finite sample accuracy of the estimators considered are contained in Section~\ref{sec:simulation_study}.
In Section~\ref{sec:real_data} an example of real data analysis is performed with the methods proposed in Section~\ref{sec:measures}.
The last Section~\ref{sec:conclusion} concludes the results of this paper.

\section{Estimation of conditional inequality curves and inequality measures}\label{sec:measures}

Let $(X_{1},\ldots ,X_{d}, Y)$ be a~random variable such that for each $x=(x_{1},\ldots,x_{d})\in \mathcal{X},$ where $\mathcal{X}$ is the value space of the random variable $X=(X_{1},\ldots,X_{d}),$
the cdf $F_{x}$ of the conditional distribution of $Y|X=x$ is continuous and increasing on $(0,\infty),$ $F_{x}(y)=0$ for $y\leq 0.$
Since the inequality curves $qZ$ and $qD$ are expressed by quantile functions, the natural approach to estimating the conditional inequality curves is to use quantile regression.
Let $Z=\log Y$ and assume that the quantile function of $Z|X=x$ is of the form
\begin{equation}\label{eq:assumption_ln}
\beta_{0}(p)+\beta_{1}(p)x_{1}+\cdots +\beta_{d}(p)x_{d},
\end{equation}
where the functions $\beta_{i},$ $i=0,1,\ldots,d,$ are nondecreasing and continuous on $\eta,1-\eta$ for each $\eta\in(0,\frac{1}{2})$.
Then, the quantile function $Q_{x}$ of $Y|X=x$ is of the form
\begin{equation}\label{eq:assumption}
Q_{x}(p)=\exp[\beta_{0}(p)+\beta_{1}(p)x_{1}+\cdots +\beta_{d}(p)x_{d}].
\end{equation}
Let us notice that under assumption (\ref{eq:assumption})
\begin{equation}\label{e:qZx}
qZ(p;Q_{x})=1-\frac{\exp[\beta_{0}(p/2)+\beta_{1}(p/2)x_{1}+\cdots +\beta_{d}(p/2)x_{d}]}{\exp[\beta_{0}(1/2+p/2)+\beta_{1}(1/2+p/2)x_{1}+\cdots +\beta_{d}(1/2+p/2)x_{d}]},
\end{equation}
\begin{equation}\label{e:qDx}
qD(p;Q_{x})=1-\frac{\exp[\beta_{0}(p/2)+\beta_{1}(p/2)x_{1}+\cdots +\beta_{d}(p/2)x_{d}]}{\exp[\beta_{0}(1-p/2)+\beta_{1}(1-p/2)x_{1}+\cdots +\beta_{d}(1-p/2)x_{d}]},
\end{equation}
for $p\in(0,1),$ and $qZ(0;Q_{x})=qZ(1;Q_{x})=qD(0;Q_{x})=1,$ $qD(1;Q_{x})=0.$
To estimate the above curves $qZ,$ $qD$ and the values of the conditional inequality measures $qZI(Q_{x}),$ $qDI(Q_{x})$, it is enough to estimate the functions $\beta_{i},$ $i=0,1,\ldots,d.$

Let $(x_{1},y_{1}),\ldots,(x_{n},y_{n})$ be realisations of independent random variables from the distribution of $(X,Y)$ and $z_{i}=\log y_{i},$ $i=1,\ldots,n.$
To estimate the functions $\beta_{i},$ $i=0,1,\ldots,d,$ we can use methods of estimation of the quantile regression function (\ref{eq:assumption_ln})
on the basis of $(x_{1}, z_{1}), \ldots, (x_{n}, z_{n})$ (for more details concerning quantile regression, see \cite{Koenker1978}, \cite{Koenker2005}).
It should be noted, however, that in the estimation of the quantile regression function, we are usually interested in estimating the values of the functions $\beta_{i}$ for several values of $p$, e.g. for $p\in\{1/10, 1/4, 1/2, 3/4, 9/10\}.$
In the problem considered, we have to estimate entire functions $\beta_{i},$ for each $i=0,1\ldots,d.$

The conditional quantile inequality curves and measures can be estimated by plugging the estimator of $Q_{x}$ into formulas (\ref{e:qZI}) and (\ref{e:qDI}).
In our model, equivalently an estimator $\hat{\beta}$ of $\beta$ can be plugged into formulas \eqref{e:qZx} and \eqref{e:qDx} to obtain plug-in estimators of $qZ$ and $qD$, respectively.
The indices $qZI$ and $qDI$ are estimated by integrating these plug-in estimators of $qZ$ and $qD$, which needs to be done numerically.
In the following, we present several methods for estimating $\beta$ (and consequently $Q_{x}$ and conditional curves and measures)  under the model considered.

\begin{remark}
    Note that the relationship between the variable $Y$ for which the conditional inequality is measured and the variable $Z$ depending linearly on the covariates can be different than $Y=\exp(Z)$. 
    However, it must ensure that $Y$ is nonnegative. 
    Here $Y=\exp(Z)$ is considered, since the logarithms of wages often depend linearly on age or tenure (see, for example, \cite{Machado2005}).
\end{remark}

The following sections introduce several methods of estimating the function $\beta$ based on different approaches to estimation of coefficients of quantile regression.
In each method, the estimators $\hat{\beta}_i$ of the functions $\beta_i$, where $i=0,1,\ldots,d,$ on the interval $(p_1, p_{m-1})$ are obtained by a~linear interpolation between the estimated values $\hat{\beta}_i(p_j)$ of $\beta_i(p_j)$ for each $p_j\in(p_1,p_2,\ldots,p_{m-1})$.
This means that for each $p$ from the interval $(p_j,p_{j+1})$, for a~certain $j$, we have
\begin{align}\label{eq:lin_interpol}
    \hat{\beta}_i(p) = \hat{\beta}_i(p_j) + (p-p_j)
    \frac{\hat{\beta}_i(p_{j+1})-\hat{\beta}_i(p_j)}{p_{j+1}-p_j}.
\end{align}

To obtain an estimator of the complete $qZ$ and $qD$ curves, $p_0$ and $p_m$ are added to the vector of $p_j$, with the values corresponding to the theorectical values of these curves at these extreme points, 
that is, $\widehat{qZ}(0,Q_{x})=1$, $\widehat{qZ}(1,Q_{x})=1$, $\widehat{qD}(0,Q_{x})=1$, and $\widehat{qZ}(1,Q_{x})=0$.
For the arguments $p\in(0,p_1)$ and $p\in(p_{m-1},1)$, the values of $\widehat{qZ}(0,Q_{x})$ and $\widehat{qD}(0,Q_{x})$ are obtained by linear interpolation similarly to those for other values of $p$.
In case of estimation of $Q_x$, the $\hat{Q}_x(0)=0$ is set and for $p\in(0,p_1)$ the values can be obtained by linear interpolation between $(0,0)$ and $(p_1,\hat{Q}_x(p_1))$.
Since the random variable $Y$ has values in $(0,\infty)$, for $p\to1$ we have $Q_x(p)\to\infty$.
Estimation of the right tail of the quantile function in this case can be performed in many ways.
For example, it can be considered constant and equal to the largest observation, or some additional assumptions about its behaviour can be made.
Since here we focus on estimation of the inequality curves and measures of the distribution, not its quantile function, we do not suggest any method of estimating the right tail of the quantile function.

\subsection{Ordinary quantile regression (OQR)} \label{sec:oqr}

For a~given $p$, the estimate $ \hat{\beta}(p)$ of the vector $\beta(p)$ is given by
\begin{equation}\label{eq:oqr_beta}
     \hat{\beta}(p)=\argmin_{\beta} \sum_{i=1}^n \rho_p(z_i-\beta^Tx_i),
\end{equation}
where
\begin{align}\label{eq:loss_function}
\rho_p(u) = u\left[p - \mathbb{I} (u < 0)\right]
\end{align}
is the loss function.
Equivalently,
\begin{equation}\label{eq:oqr_beta_al}
      \hat{\beta}(p) = \argmin_{\beta} \sum_{i=1}^n
    \left[|z_{i}-\beta^Tx_i| + (2p-1)(z_{i}-\beta^Tx_i)\right].
\end{equation}


Bassett and Koenker \cite{Bassett1982} proposed to estimate the conditional quantile function by a~left-continuous step function based on $m-1$ points $\hat\beta(p_j)$ obtained by this method.
This approach does not guarantee that the sequence of the estimators $\hat{Q}_x(p_1), \ldots, \\ \hat{Q}_x(p_{m-1})$ of the conditional quantile function at the points $(p_1, \ldots, p_{m-1})$ based on $\hat{\beta}$ will be monotonous.
Several ideas how to tackle the problem of quantile crossing have been considered in the literature.
He \cite{He1997} proposed a~method to estimate the quantile curves while ensuring noncrossing, however this approach assumes 
a~model belonging to a~class of linear location-scale models, studied by Gutenbrunner and Jurečková \cite{Gutenbrunner1992}.
Wu and Liu \cite{Wu2009} proposed a~stepwise method for estimating multiple noncrossing quantile regression functions in the~linear and nonlinear case with the kernel estimator.
Bondell et al. \cite{Bondell2010} described a~method to alleviate the problem of quantile crossing in the linear case. It extends the minimisation problem to a~linear programming problem with some constraints. 
Both approaches impose the monotonicity of the conditional quantile function only on the subset of possible values that is bounded from below and from above by the minimum and maximum values of each coordinate of $X$.

In this paper, we propose a~novel approach to this problem in which an isotonic regression is applied to $\hat{\beta}(p_j)$ (for $j\in\{1,\ldots,m-1\}$) resulting in a~nondecreasing continuous estimate of the function $\beta$.

\subsection{Ordinary quantile regression with isotonic regression (IOQR)}
To enforce the monotonicity of the estimated functions, we present a~novel method applying isotonic regression to $\hat{\beta_i}(p_j)$, where $j\in\{1,\ldots,m-1\}$, for each $i$, resulting in an estimator $\tilde{\beta_i}(p)$ of ${\beta_i}(p)$.
An example of $\hat{\beta}$ and $\tilde{\beta}$ estimators in case of $d=1$ is presented in Figure \ref{fig:iso_regr_betas_fld}.

In this approach, first, the OQR estimators $\hat{\beta}_{i}(p_{j})$ are subject to isotonic regression, which returns the nondecreasing estimators $\tilde{\beta}_{i}(p_{1})\leq \tilde{\beta}_{i}(p_{2})\leq \ldots \leq \tilde{\beta}_{i}(p_{m-1})$ of $\beta_i$ for each $i\in\{0,1,\ldots,d\}$.
The $\tilde{\beta_i}(p_j)$ are the values minimising
\begin{align}
    \sum_{j=1}^{m-1} w_j\left[\tilde{\beta_i}(p_j)-\hat{\beta_i}(p_j)\right]^2,
\end{align}
where $w_j$, for $j\in\{1,\ldots,m-1\}$, are positive weights, with condition that $\tilde{\beta_i}(p_j) \leq \tilde{\beta_i}(p_{j+1})$ if $p_j \leq p_{j+1}$ for any $i\in\{0,1,\ldots,d\}$ (see, for example, \cite{Best1990}). In this article, we use the simple weights $w_j=1$ for each $j$.
The values of $\tilde{\beta_i}(p)$ for any $p$ are obtained by linear interpolation, as described in formula \eqref{eq:lin_interpol} and depicted in Figure \ref{fig:iso_regr_betas_fld}.
For more information on isotonic regression, see, for example, \cite{Barlow1972}.

\begin{remark}
    Isotonic regression is sometimes used together with quantile regression, but to impose monotonicity of $Q_x(p)$ with respect to $x$ for a~fixed $p$ (see, for example, \cite{Abrevaya2005} or \cite{Moesching2020}).
    In this paper, isotonic regression is applied to values obtained with OQR to impose the monotonicity of $Q_x(p)$ with respect to $p$ for each fixed $x$.
\end{remark}

\begin{figure}
    \centering
    \includegraphics[width=\textwidth]{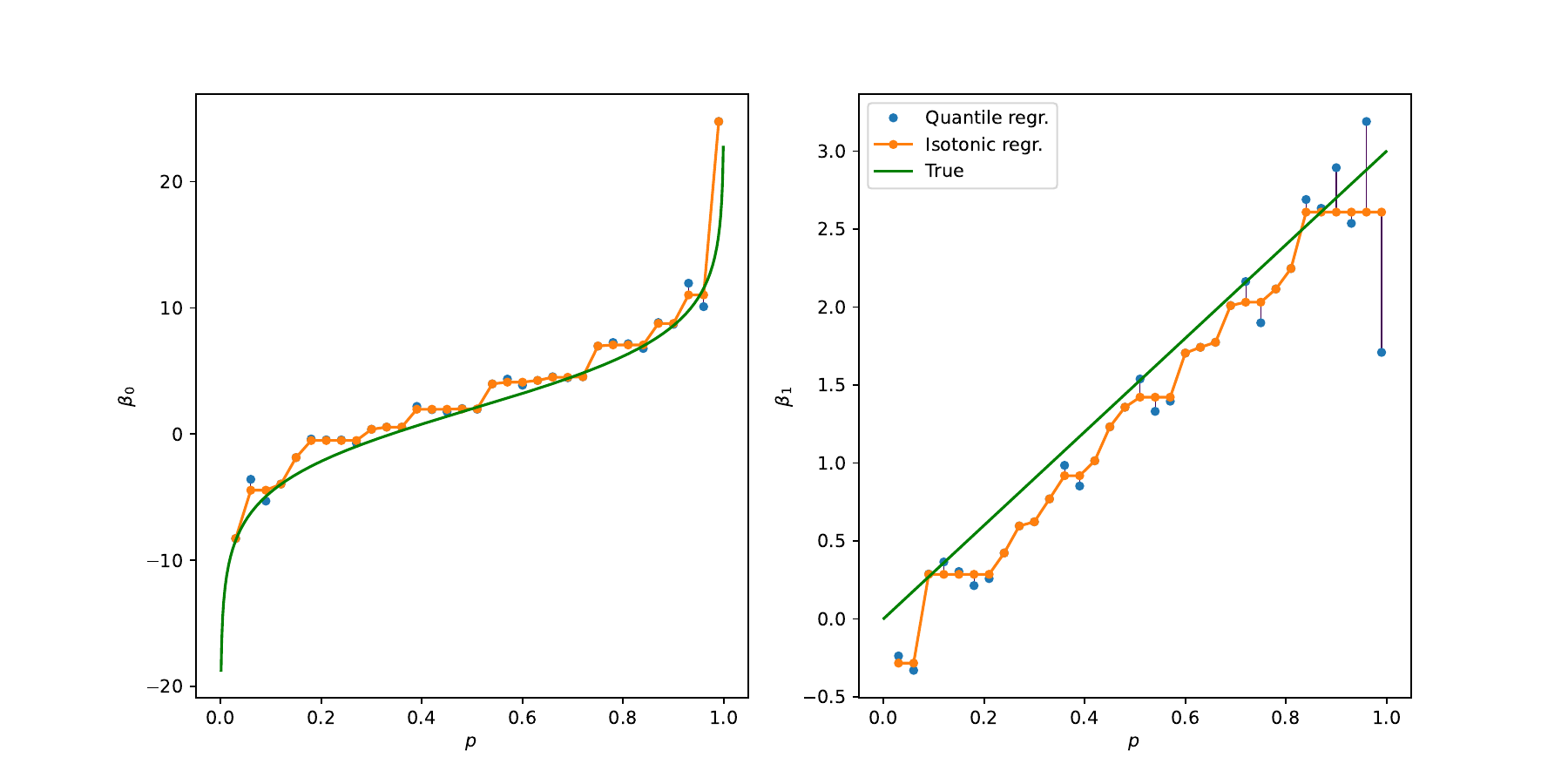}
    \caption{Isotonic regression on $\hat{\beta_0}(p)$ and $\hat{\beta_1}(p)$ compared to the true value of ${\beta_0}(p)$ (left) and ${\beta_1}(p)$ (right) for sample of data generated from flattened logistic distribution $FLD(2, 3, \kappa)$ with $\kappa\in[0,10]$ (see Section \ref{sec:fld}). Estimation based on 33 equidistant orders of quantiles}
    \label{fig:iso_regr_betas_fld}
\end{figure}

\subsection{Approximated quantile regression (AQR) and approximated quantile regression with isotonic regression (IAQR)}

An estimator of the vector $\beta(p)$ can also be obtained by replacing the absolute value function in \eqref{eq:oqr_beta_al} with its smooth approximation.
The advantage of this approach is that the minimised loss function is smooth, which may allow optimisation procedures to be performed faster.
Several approaches have been tried to smooth the loss function given in formula \eqref{eq:loss_function}.
For example Horowitz applied a~kernel smoothing \cite{Horowitz1998}, Muggeo et al. \cite{Muggeo2012} proposed a~piecewise polynomial modification of the loss function,
while He et al. \cite{He2023} proposed a~\textit{conquer} method, where a~convex smoothing function is applied.
Another idea of smoothing the loss function was proposed by Saleh and Saleh (see formula (12) in \cite{Saleh2021}).

Hutson \cite{Hutson2024} proposed to replace the absolute value function with the approximating function $g_{\tau}$ given by
\begin{equation}\label{eq:g_tau}
g_{\tau}(u)=
\tau\left[\log\left(e^{-\frac{u}{\tau}}+1\right)
+\log\left(e^{\frac{u}{\tau}}+1\right)\right].
\end{equation}

Here, $\tau$ is a~smoothing parameter regulating how much the AQR differs from the OQR.
For iid observations, Hutson proposed using $\tau=\hat{\tau}$, where $\hat{\tau}=s/\sqrt{n}$, with $s$~being a~sample standard deviation and $n$~being a~size of the sample (see Section 4 in \cite{Hutson2024}).
Choosing too large $\tau$ leads to more biased estimators, while choosing $\tau$ close to 0 results in an estimator converging to the empirical one.

This estimator can be modified by applying a~different function approximating $|\cdot|$ instead of $g_{\tau}$ defined in (\ref{eq:g_tau}).
Since $g_{\tau}$ may significantly overestimate the $|\cdot|$ function in the neighbourhood of 0, another function, for example $h_\tau(u)=u\tanh(u/\tau)$, may be used.
Ramirez et al. \cite{Ramirez2014} proved that $\sqrt{u^2+\tau}$ is the most computationally efficient smooth approximation to $|\cdot|$.
Bagul \cite{Bagul2017} showed that $h_{\tau}(u)$ is a~better smooth approximation than $\sqrt{u^2+\tau}$ in terms of accuracy.
Later, Bagul \cite{Bagul2021} compared four approximations, which all underestimate the absolute value in the neighbourhood of 0,~while $g_{\tau}$ overestimates.
For this reason, our proposal is to use a~mix of $g_{\tau}$ with one of these four functions to receive a~better approximation of $|\cdot|$.
We chose $h_{\tau}(u)$ because of its simplicity and denoted $f_{\tau}(u)=(g_{\tau}(u)+h_{\tau}(u))/2$, expecting to receive an estimator of $\beta$ with a~smaller bias or MSE than when $g_{\tau}$ or $h_{\tau}$ is used.

Taking into account formula (\ref{eq:oqr_beta_al}) and the approximating function $f_{\tau}$,  we can consider an estimator of $\beta(p)$ defined by
\begin{equation}\label{eq:aqr}
    \hat{\beta}_{\tau}(p) = \argmin_{\beta} \sum_{i=1}^n
    \left[f_{\tau}(z_{i}-\beta^Tx_i) + (2p-1)(z_{i}-\beta^Tx_i)\right].
\end{equation}

This estimator will be denoted further by AQR. When $\tau$ tends to zero, the results of AQR tend to those of OQR, since the approximation $f_{\tau}(x)\rightrightarrows |x|$ when $\tau$ tends to 0.

Similarly as in the case of OQR, isotonic regression can also be performed on the estimated values of $\hat{\beta}_{\tau,i}(p_j)$ for each $i\in\{0, \ldots, d\}$, $j\in\{1,\ldots,m-1\}$ 
resulting in a~nondecreasing estimator $\tilde{\beta}_{\tau,i}$ of ${\beta}_i$ functions. 
This method will be further denoted as IAQR.

\subsection{Other methods of obtaining estimators of $\beta$}
Several methods of obtaining monotonous estimator of $Q_x$ were proposed in the literature. In the following, we describe those of them that were included in a~simulation study in Section \ref{sec:simulation_study}.

\subsubsection{Constrained quantile regression (CQR)}
Koenker and Ng \cite{Koenker2005a} described another approach that can be used to obtain monotonicity of estimators of functions $\beta_i$ for any $i\in\{0, \ldots, d\}$.
Their algorithm allows estimating the coefficient $\beta$ for a~certain quantile order $p_j$ with some constraints on $\beta$.
For example, we may impose for each $i\in\{0, \ldots, d\}$ that the estimator $\hat{\beta}_i(p_j)$ of the coefficient $\beta_i(p_j)$ is larger than or equal to $\hat{\beta}_i(p_{j-1})$, for every $p_j$.
In a~similar stepwise method proposed by Wu and Liu, it is recommended to start by estimating (with the OQR method) the coefficients for $p=0.5$, since the variance of the OQR estimator of $\beta_i(p)$ is usually the lowest for $p=0.5$ (see \cite{Wu2009}).
Denote by $j^{*}$ a~number such that $p_{j^{*}}=0.5$.
After estimating $\hat{\beta}_i(p_{j^{*}})$, we estimate $\hat{\beta}_i(p_{j^{*}+1})$ and $\hat{\beta}_i(p_{j^{*}-1})$ with constraints that
$\hat{\beta}_i(p_{j^{*}+1}) \geq \hat{\beta}_i(p_{j^{*}})$ and
$\hat{\beta}_i(p_{j^{*}}) \geq \hat{\beta}_i(p_{j^{*}-1})$ and continue this procedure until we reach $\hat{\beta}_i(p_{1})$ and $\hat{\beta}_i(p_{m-1})$.
This approach may have a~potential risk of error propagation, that is, if the estimator of the median has a~large error, the other quantiles can also be estimated with large errors, since they strongly depend on the estimator of the "previous" quantile. 

\subsubsection{Stepwise quantile regression (WL1)}
Wu and Liu \cite{Wu2009} proposed in 2009 two different stepwise methods leading to noncrossing quantile regression lines.
The idea is to start with estimating the regression line for the median and then estimate step by step each quantile regression line by finding 
\begin{align*}
\hat{\beta}({p_{k+1}}) = \argmin_{\beta} \sum_{i=1}^{n} \rho_{p_{k+1}} \left( y_i  - \beta^T x_i \right),
\end{align*}
with a~condition that
\begin{align*}
\hat{\beta}^T({p_{k}}) x + \delta_0 \leq \beta^Tx, \quad \forall x \in \{0,1\}^d,
\end{align*}
where $\hat{\beta}(p_{k+1})$ and $\hat{\beta}(p_{k})$ are the estimators of the coefficients of quantile regression lines estimated for ${p_{k+1}}$ and ${p_{k}}$, respectively, with ${p_{k+1}}={p_{k}}+\frac{1}{m}$.
The parameter $\delta_0$ is chosen to be small and is given only to ensure that the subsequent coefficients are not equal to each other. Analogously, the coefficients are estimated for quantiles lower than median. 
This approach was denoted by Wu and Liu as the naïve constrained estimation scheme.
They also proposed an improved constrained estimation (can be denoted by WL2) that is more efficient, since it has a~reduced number of constraints (see Section 3.2 in \cite{Wu2009}).
Hence, it can be a~good an alternative, when the dimension $d$ of the data set is large.

\subsubsection{Simultaneously estimated noncrossing quantile regression (BRW)} 
A~method of estimating simultaneously noncrossing regression lines of quantile regression for multiple quantiles was proposed by Bondell et al. \cite{Bondell2010} in 2010. 
The idea of estimating the coefficients ${\beta}(p_j)$ for $j\in\{1, \dots, m-1\}$ is to find a~solution to an optimisation problem given by
\begin{align*}
    (\hat{\beta}(p_1), \hat{\beta}(p_2),\ldots,\hat{\beta}(p_{m-1})) = \arg \min_{\beta} \sum_{j=1}^{m-1} w(p_j) \sum_{i=1}^{n} \rho_{p_j} \left( y_i - \beta^T(p_j) z_i \right)
\end{align*}
subject to 
\begin{align*}
\beta^T(p_j) z \geq \beta^T(p_{j-1})z , \quad \forall x \in \mathcal{D}, \quad j \in \{2, \dots, m-1\},
\end{align*}
where $w(p)$ is a~weight function and can be equal to 1~for each $p$.
Here $z=(1,x_1,x_2,\ldots,x_d)$, for each $x=(x_1,x_2,\ldots,x_d) \in \mathcal{D}$, where $\mathcal{D}$ is a~$d$-dimentional convex hull of the realisations of
$X_{1},\ldots,X_{n}$. 
This type of problem can be solved using linear programming methods. 
The construction allows the quantile regression lines to cross for vectors $x$ which are outside of $\mathcal{D}$. 
This approach is an improvement compared to the stepwise method of Wu and Liu in terms of computation time.

\section{Properties of the estimators of conditional quantile measures}
\label{sec:properties}

This section covers the properties of the plug-in estimators of the conditional inequality curves and conditional inequality measures described in previous sections.
 
We assume the following additional conditions.

\begin{enumerate}[label={(A}\arabic*{)}]

\item The distribution of $X$ is absolutely continuous on a~certain set $V\in\mathbb{R}^d$ and its density is positive on $V$ and continuous at $d$-dimensional zero vector $\boldsymbol{0}$,
\item the conditional distribution $Z|X=x$ has a~probability density function which is Hölder continuous in an open neighbourhood of $0,$
\item $\frac{1}{n}\bf{X}^{\ast T}\bf{X}^{\ast}$$\xrightarrow[n\to\infty]{P}A$, where $A$ is a~positive definite matrix and $\bf{X}^{\ast}$ is a~$(d+1)$-dimensional matrix, where first column is a~column with ones and $(i+1)$th column consists of $n$ realisations of the covariate $X_i$, for $i\in\{1,\ldots,d\}$.
\end{enumerate}

\begin{remark}
    If the support $V$ of the random vector $X$ does not include the zero vector, the random vector $X$ can be shifted to ensure that assumption (A1) is met.
\end{remark}

The following lemma will be necessary to prove the main results.


\begin{lemma}\label{lem:lower_sup}
Let $\xi$ be a~nondecreasing function on $[0,1]$ and $(p_1, \hat{\xi}(p_1))$, $\ldots,$ $(p_{m-1}, \\ \hat{\xi}(p_{m-1}))$ be a~sequence of $m-1$ points such that $p_1<\ldots<p_{m-1}$. 
Let $\tilde{\xi}(p_1),$ $\ldots,$ $\tilde{\xi}(p_{m-1}))$ be a~nondecreasing sequence of points which minimises 
    \begin{align*}
        \sum_{j=1}^m \left[\hat{\xi}(p_j)-\tilde{\xi}(p_j)\right]^2.
    \end{align*}
If $m>2$, we have
    \begin{align}\label{e:sup_tilde_hat_inequality}
        \max_{j\in\{1,\ldots,m-1\}}|\tilde{\xi}(p_j)-{\xi}(p_j)| \leq
        \max_{j\in\{1,\ldots,m-1\}}|\hat{\xi}(p_j)-{\xi}(p_j)|.
    \end{align}
\end{lemma}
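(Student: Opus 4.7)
The plan is to exploit the classical min--max (Robertson--Wright--Dykstra) representation of the isotonic regression,
\[
\tilde{\xi}(p_j) \;=\; \max_{1 \leq s \leq j} \min_{j \leq t \leq m-1} \frac{1}{t-s+1}\sum_{k=s}^{t} \hat{\xi}(p_k),
\]
together with its dual min--max form. Set $M := \max_{j} |\hat{\xi}(p_j) - \xi(p_j)|$, so that $\xi(p_k) - M \leq \hat{\xi}(p_k) \leq \xi(p_k) + M$ for every $k$. The claim \eqref{e:sup_tilde_hat_inequality} then reduces to the two one-sided bounds $\tilde{\xi}(p_j) \leq \xi(p_j) + M$ and $\tilde{\xi}(p_j) \geq \xi(p_j) - M$, uniformly in $j$.

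For the upper bound I would fix $j$ and, for each admissible $s \leq j$, estimate the inner minimum by the particular choice $t = j$:
\[
\min_{t \geq j} \frac{1}{t-s+1}\sum_{k=s}^{t} \hat{\xi}(p_k) \;\leq\; \frac{1}{j-s+1}\sum_{k=s}^{j} \hat{\xi}(p_k) \;\leq\; \frac{1}{j-s+1}\sum_{k=s}^{j} \bigl[\xi(p_k) + M\bigr].
\]
Since $\xi$ is nondecreasing and $k \leq j$, one has $\xi(p_k) \leq \xi(p_j)$ for every $k$ in the sum, so the right-hand side is at most $\xi(p_j) + M$. Taking the maximum over $s$ preserves this bound and yields $\tilde{\xi}(p_j) \leq \xi(p_j) + M$.

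The lower bound is symmetric. Using the dual representation $\tilde{\xi}(p_j) = \min_{t \geq j} \max_{s \leq j} \frac{1}{t-s+1}\sum_{k=s}^{t}\hat{\xi}(p_k)$, I would fix $j$ and, for each $t \geq j$, bound the outer maximum from below by choosing $s = j$; the resulting average of $\hat{\xi}(p_j),\ldots,\hat{\xi}(p_t)$ is at least the average of $\xi(p_j) - M,\ldots,\xi(p_t) - M$, which by monotonicity of $\xi$ is at least $\xi(p_j) - M$. Taking the min over $t$ preserves the bound, giving $\tilde{\xi}(p_j) \geq \xi(p_j) - M$. Combining the two bounds yields $|\tilde{\xi}(p_j) - \xi(p_j)| \leq M$ for every $j$, which is exactly the required inequality.

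The main obstacle is conceptual rather than technical: one has to recognise this lemma as the well-known sup-norm contraction property of isotonic regression with respect to any monotone target, so that the $L^2$-projection characterisation used to define $\tilde{\xi}$ can be replaced by the more convenient min--max formula. Once that representation is invoked, the proof reduces to two short interval-averaging estimates combined with monotonicity of $\xi$, and no further delicate optimality argument is needed; in particular the condition $m>2$ enters only to ensure that the indices $s<j<t$ are available for the monotonicity comparison to be nontrivial.
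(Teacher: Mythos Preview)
Your argument is correct and is in fact cleaner than the paper's. The paper does not invoke the max--min representation of the isotonic regression; instead it works with the PAVA block structure: it partitions $\{p_1,\ldots,p_{m-1}\}$ into the set $A$ where $\tilde\xi=\hat\xi$ and the constant blocks $B_i$ where $\tilde\xi\equiv c_i$, uses the known fact that on each block $\hat\xi(p_{i_1})>c_i>\hat\xi(p_{i_k})$, and then runs a three-way case analysis on the positions of $\xi(p_{i_1})$ and $\xi(p_{i_k})$ relative to $\hat\xi(p_{i_1})$ and $\hat\xi(p_{i_k})$ to obtain \eqref{eq:hat_tilde_maximum} block by block. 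Your route bypasses all of this: once the max--min formula is accepted, the two one-sided bounds follow from a single interval-averaging estimate and the monotonicity of $\xi$, with no case distinctions at all. The paper's proof is more self-contained in that it only needs the elementary endpoint property of PAVA blocks rather than the full min--max identity, but yours is shorter and makes the sup-norm contraction transparent. One small quibble: your closing remark that $m>2$ is needed ``to ensure that the indices $s<j<t$ are available'' is not accurate for your own argument, since you only ever specialise to $t=j$ or $s=j$; the bound goes through for any number of points and the hypothesis $m>2$ is immaterial to either proof.
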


\begin{proof}
    Denote by $A = \{ p_j: \tilde{\xi}(p_j) = \hat{\xi}(p_j) , j\in\{1,2,\ldots,m-1 \} \}$
    and $B=B_1 \cupdot \ldots \cupdot B_r$, where $r$ is a~number of such disjoint subsets $B_i$, where $\tilde{\xi}(p_{j}) = c_i$, for every $p_{j}\in B_i$.
    For each subset $B_i$ of size $k$ (consisting of point $\{p_{i_1}, p_{i_2}, \ldots, p_{i_k}\}$ such that $p_{i_1} < p_{i_2} < \dots < p_{i_k}$), by Corollary 2 in \cite{Best1990}, we know that $\hat{\xi}(p_{i_1}) > c_i > \hat{\xi}(p_{i_k})$. 
    We will show that for each $B_i$ we have
    \begin{equation}\label{eq:hat_tilde_maximum}
        \max_{p_j\in B_i}|\hat{\xi}(p_{j})-{\xi}(p_{j})| >
        \max_{p_j\in B_i}|\tilde{\xi}(p_{j})-{\xi}(p_{j})|.
    \end{equation}
The values $\xi(p_{{i_1}})$ and $\xi(p_{{i_k}})$ can be located in several ways with respect to the points $\hat{\xi}(p_{{i_1}})$ and $\hat{\xi}(p_{{i_k}})$. We will show that in each of the possible cases \eqref{eq:hat_tilde_maximum} holds.
The cases are as follows:
\begin{enumerate}[label=(\roman*)]
\item ${\xi}(p_{{i_1}}) \geq \hat{\xi}(p_{{i_1}})$,
\item $\hat{\xi}(p_{{i_1}}) \geq {\xi}(p_{{i_1}})$ and 
${\xi}(p_{{i_k}}) \geq \hat{\xi}(p_{{i_k}})$,
\item $\hat{\xi}(p_{i_k}) \geq \xi(p_{i_k})$.
\end{enumerate}
In case (i), $\xi(p_{i_1}) \geq \hat{\xi}(p_{{i_1}})$ and the monotonicity of $\tilde{\xi}$ and ${\xi}$ imply that
\begin{equation*}
    {\xi}(p_{i_k}) \geq {\xi}(p_{i_1}) \geq \hat{\xi}(p_{i_1})
    > c_i > {\xi}(p_{i_k}).
\end{equation*}
Hence $|\hat{\xi}(p_{i_k}) - {\xi}(p_{i_k})| >
|c_i - {\xi}(p_{i_k})| = \max_{p_j\in B_i}|\tilde{\xi}(p_{j}) - {\xi}(p_{j})|$. \\
Case (ii) can be divided into three subcases: \\
a) When $\hat{\xi}(p_{i_1}) > c_i \geq \xi(p_{i_1})$, we have
\begin{align*}
    \max_{p_j\in B_i}| c_i - {\xi}(p_{i_j})| = | c_i - {\xi}(p_{i_k})| <
| \hat{\xi}(p_{i_k}) - {\xi}(p_{i_k})|
\leq \max_{p_j\in B_i} | \hat{\xi}(p_{i_j}) - {\xi}(p_{i_j})|.
\end{align*}
b) When ${\xi}(p_{i_k}) \geq c_i \geq \xi(p_{i_1})$ and $c_i \geq \xi(p_{i_1})$, we have
\begin{align*}
    \max_{p_j\in B_i}| c_i - {\xi}(p_{i_j})| = 
    | c_i - {\xi}(p_{i_1})| \vee | c_i - {\xi}(p_{i_k})|,
\end{align*}
but we also have
\begin{align*}
    | c_i - {\xi}(p_{i_1})| <  | \hat{\xi}(p_{i_1}) - {\xi}(p_{i_1})|
    \qquad \text{and} \qquad
    | c_i - {\xi}(p_{i_k})| <  | \hat{\xi}(p_{i_1}) - {\xi}(p_{i_k})|,
\end{align*}
hence
\begin{align*}
    \max_{p_j \in B_i}| c_i - {\xi}(p_{i_j})| <
    | \hat{\xi}(p_{i_1}) - {\xi}(p_{i_1})| 
    \vee | \hat{\xi}(p_{i_k}) - {\xi}(p_{i_k})| \leq
    \max_{p_j \in B_i}| \hat{\xi}(p_{i_j}) - {\xi}(p_{i_j})|.
\end{align*}
c) When $ c_i > {\xi}(p_{i_k}) \geq  \hat\xi(p_{i_k})$, we have
\begin{align*}
    \max_{p_j \in B_i}| c_i - {\xi}(p_{i_j})|  =
    | c_i - {\xi}(p_{i_1})| < | \hat{\xi}(p_{i_1}) - {\xi}(p_{i_1})|
    \leq \max_{p_j \in B_i}| \hat{\xi}(p_{i_j}) - {\xi}(p_{i_j})|.
\end{align*}
In case (iii), $\hat{\xi}(p_{i_k}) \geq \xi(p_{i_k})$ and the monotonicity of $\tilde{\xi}$ and ${\xi}$ imply that
\begin{equation*}
\max_{p_j\in B_i}| c_i - {\xi}(p_{i_j})| = | c_i - {\xi}(p_{i_1})| <
| \hat{\xi}(p_{i_1}) - {\xi}(p_{i_1})|
\leq \max_{p_j\in B_i} | \hat{\xi}(p_{i_j}) - {\xi}(p_{i_j})|.
\end{equation*}
If $B=\emptyset$, then we have the equality in \eqref{e:sup_tilde_hat_inequality} which ends the proof.
\end{proof}

We now move on to the properties of the estimators of conditional $qZ$ and $qZI$ based on the quantile regression.

\begin{theorem}\label{thm:IOQR_conv}
 Let $(X, Y)$ be random vectors and $\beta = ({\beta}_0, \dots, {\beta}_d)$ be a~vector of $(d+1)$ functions as described in Section \ref{sec:measures}.
Let $\hat{\beta}^{(m,n)} = \left(\hat{\beta}_0^{(m,n)}, \dots, 
\hat{\beta}_d^{(m,n)}\right)$ be the estimator of $\beta$ based on $n$ copies of $(X,Y)$ such that $\hat{\beta}_i^{(m,n)}$ is defined as a~linear interpolation between points 
$\left(\frac{1}{m}, \hat{\beta}_i^{(m,n)}(\frac{1}{m})\right),\ldots,
\left(\frac{m-1}{m}, \hat{\beta}_i^{(m,n)}(\frac{m-1}{m})\right)$ 
for each 
$i \in \{0,\ldots,d\}$
 and let $\tilde{\beta}^{(m,n)}$ be the estimators of $\beta$ obtained with IOQR method based on $n$ copies of $(X,Y)$.
 Moreover, let $\hat{Q}_x$ and $\tilde{Q}_x$ be the plug-in estimator of $Q_x$ based on $\hat{\beta}^{(m,n)}$ and $\tilde{\beta}^{(m,n)}$, respectively.
Then, under assumptions (A1)--(A3), for $m,n\to\infty$ we have
    \begin{enumerate}[label=(\roman*)]
        \item $\hat{\beta}^{(m,n)} \xrightarrow{P} \beta$ on $[\eta,1-\eta]$ for each $\eta\in(0,\frac{1}{2})$;
        \item $\tilde{\beta}^{(m,n)} \xrightarrow{P} \beta$ on $[\eta,1-\eta]$ for each $\eta\in(0,\frac{1}{2})$;
        \item $\hat{Q}_{x}^{(m,n)} \xrightarrow{P} Q_{x}$ and
        $\tilde{Q}_{x}^{(m,n)} \xrightarrow{P} Q_{x}$ on $[\eta,1-\eta]$ for each $\eta\in(0,\frac{1}{2})$;
        \item ${qZ}\left(p;\hat{Q}_{x}^{(m,n)}\right) \xrightarrow{P} qZ(p;Q_{x}),$ and \\
        ${qZ}\left(p;\tilde{Q}_{x}^{(m,n)}\right) \xrightarrow{P} qZ(p;Q_{x})$ on $[\eta,1-\eta]$ for each $\eta\in(0,\frac{1}{2})$; 
        \item $\int_\eta^{1-\eta}{qZ}\left(p;\hat{Q}_{x}^{(m,n)}\right) dp\xrightarrow{P} \int_\eta^{1-\eta}qZ(p;Q_{x})dp$ and \\
        $\int_\eta^{1-\eta}{qZ}\left(p;\tilde{Q}_{x}^{(m,n)}\right) dp\xrightarrow{P} \int_\eta^{1-\eta}qZ(p;Q_{x})dp$;
    \end{enumerate}
\end{theorem}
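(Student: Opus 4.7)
The plan is to prove the five conclusions in order, with parts (iii)--(v) obtained from parts (i)--(ii) by applying the continuous mapping theorem. The analytic substance therefore lies in establishing uniform consistency of $\hat\beta^{(m,n)}$ and $\tilde\beta^{(m,n)}$ on $[\eta,1-\eta]$.

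For (i), I would start from the classical quantile regression consistency theory (Koenker and Bassett; see Koenker 2005): under (A1)--(A3), for each compact $[\eta^*,1-\eta^*]\subset(0,1)$,
\begin{equation*}
\sup_{p\in[\eta^*,1-\eta^*]}\|\hat\beta(p)-\beta(p)\|\xrightarrow{P}0.
\end{equation*}
Fix $\eta$ and pick $\eta^*\in(0,\eta)$. For $m$ large, every $p\in[\eta,1-\eta]$ lies between two consecutive grid points in $[\eta^*,1-\eta^*]$, and a triangle-inequality split of the interpolated estimator against the piecewise-linear interpolation $\beta_i^{(m)}$ of $\beta_i$ gives
\begin{equation*}
\bigl|\hat\beta_i^{(m,n)}(p)-\beta_i(p)\bigr|\leq\max_{p_j\in[\eta^*,1-\eta^*]}\bigl|\hat\beta_i(p_j)-\beta_i(p_j)\bigr|+\omega_i(1/m),
\end{equation*}
with $\omega_i$ the modulus of continuity of $\beta_i$ on $[\eta^*,1-\eta^*]$; both terms vanish as $m,n\to\infty$. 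For (ii), Lemma \ref{lem:lower_sup} shows that isotonic regression cannot increase the maximum error at the grid, so grid-point convergence carries over from $\hat\beta_i^{(m,n)}$ to $\tilde\beta_i^{(m,n)}$; the same linear-interpolation step then upgrades grid-point convergence to uniform convergence on $[\eta,1-\eta]$.

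For (iii), since $Q_x(p)=\exp[\beta_0(p)+\beta_1(p)x_1+\cdots+\beta_d(p)x_d]$ is a continuous function of $\beta(p)$, the continuous mapping theorem converts the uniform convergence from (i)--(ii) into uniform convergence of $\hat Q_x^{(m,n)}$ and $\tilde Q_x^{(m,n)}$ to $Q_x$ on $[\eta,1-\eta]$. For (iv), on compact subsets of $(0,1)$ the function $Q_x$ is bounded above and bounded away from zero, so the functional $Q\mapsto qZ(\cdot;Q)$ in \eqref{e:qZx} is continuous at $Q_x$ and uniform convergence propagates once more. For (v), uniform convergence on the compact interval $[\eta,1-\eta]$ together with the boundedness of the integrands permits exchanging the probability limit with the integral in the dominated-convergence style, giving the integral conclusion directly from (iv).

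The main obstacle is the subtle point in (ii): Lemma \ref{lem:lower_sup} bounds the maximum IOQR error by the maximum OQR error taken over the \emph{full} grid, whose boundary points $p_1,p_{m-1}$ approach $0$ and $1$ where OQR need not be uniformly consistent. I would circumvent this by applying the lemma to the restriction of the grid to $[\eta^*,1-\eta^*]$ and then comparing the full-grid isotonic regression to the restricted-subgrid one at interior points via the min-max (pool-adjacent-violators) characterization, which shows that far-from-boundary values of the isotonic regression are determined by nearby OQR values only, so the boundary pathologies do not propagate into $[\eta,1-\eta]$.
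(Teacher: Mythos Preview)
Your overall strategy---uniform OQR consistency on compacta for (i), Lemma~\ref{lem:lower_sup} plus interpolation for (ii), continuous mapping for (iii)--(v)---is exactly the paper's. Your treatment of (i) is in fact tidier than the paper's: where you invoke uniform consistency of the quantile-regression process on $[\eta^*,1-\eta^*]$ and a modulus-of-continuity bound for the interpolation error, the paper argues pointwise at a finite grid (citing Chaudhuri 1991 for strong consistency at each $p_j$) and then handles the interpolated value by a case split on whether $\hat\beta_i^{(m,n)}$ is locally increasing or decreasing between adjacent nodes. Parts (iii)--(v) are identical in substance to the paper's.

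The obstacle you flag in (ii) is genuine, and the paper does not address it: after bounding the interior supremum by $2\varepsilon_m+3\max_j|\tilde\beta_i^{(m,n)}(p_j)-\beta_i(p_j)|$ and invoking Lemma~\ref{lem:lower_sup}, the paper simply asserts that $\max_{j}|\hat\beta_i^{(m,n)}(p_j)-\beta_i(p_j)|\to 0$ over the full grid $\{1/m,\ldots,(m-1)/m\}$, with no discussion of the extreme nodes. Your proposed repair, however, has a gap of its own. Isotonic regression is \emph{not} local in the sense you describe: in the min--max representation $\tilde\beta_i(p_j)=\max_{a\le j}\min_{b\ge j}\mathrm{Avg}(\hat\beta_i;a,b)$ the optimal $a,b$ may reach the boundary. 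If, say, $\hat\beta_i(p_1)$ is aberrantly large, pool-adjacent-violators merges it with as many interior nodes as needed to restore monotonicity, and that pooled block can intrude into $[\eta,1-\eta]$; the full-grid and subgrid isotonic fits therefore need not agree at interior points. Making (ii) rigorous requires either a tightness bound on the boundary OQR values (so the pooled blocks shrink toward $0$ and $1$), or an argument that, because $\beta_i$ itself is monotone, boundary aberrations can only occur in the ``harmless'' direction and hence are not pooled inward---neither of which follows from the min--max formula alone.
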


\begin{proof}
(i) The goal is to show that for every $i\in\left\{0,\ldots,d \right\}$
\begin{align*}
    \sup_{p \in [\eta,1-\eta]} \left| \hat{\beta}_i^{(m,n)}(p) - \beta_i(p) \right| 
    \xrightarrow[m,n\to\infty]{P}0
\end{align*}
which is equivalent to showing that $\forall \varepsilon > 0$ we have
\begin{align*}
    \lim_{m,n\to\infty}
    P\left(
    \sup_{p \in [\eta,1-\eta]} \left| \hat{\beta}_i^{(m,n)}(p) - \beta_i(p) \right| > \varepsilon \right) = 0.
\end{align*}
This is equivalent to 
\begin{align*}
    \lim_{m,n\to\infty}
    P\left(
    \sup_{p \in [\eta,1-\eta]} \left| \hat{\beta}_i^{(m,n)}(p) - \beta_i(p) \right| \leq \varepsilon 
    \right) = 1,
\end{align*}
which means that for every $\varepsilon>0$ and $\delta>0$ there exist $m_0\in\mathbb{N}$ and $n_0\in\mathbb{N}$ such that for every $m>m_0$ and $n>n_0$ we have
\begin{align*}
    P\left(
    \sup_{p \in [\eta,1-\eta]} \left| \hat{\beta}_i^{(m,n)}(p) - \beta_i(p) \right| \leq \varepsilon \right) \geq 1-\delta.
\end{align*}
Since $\beta_i$ is uniformly continuous on an interval $[\eta,1-\eta]$, for every $\varepsilon > 0$ there exist a~finite partition of $[\eta,1-\eta]$, namely $\eta = p_1 < p_2 < \ldots < p_{m_0} < 1-\eta$, such that ${\beta}_i(p_{j+1}) - \beta_i(p_j) < \frac{\varepsilon}{2}$, for all $j\in\{1,2,\ldots,m_0\}$.
An estimator $\hat{\beta}_i^{(m_0,n)}(p_j)$ is a~strongly consistent estimator of ${\beta}_i(p_j)$, for each point $p_j$, for all $j\in\{1,2,\ldots,m_0\}$, (see Theorem 3.2 in \cite{Chaudhuri1991}).
Thus, for every $j$, there exists $n_j\in\mathbb{N}$, such that $\forall n>n_j$, with probability 1, we have
$\left|\hat{\beta}_i^{(m_0,n)}(p_j) - \beta_i(p_j) \right| < \frac{\varepsilon}{2}$. 
Let $n_0 = \max_{j\in\{1,2,\ldots,m_0\}} \{ n_j \}$. Then $\forall n>n_0$, with probability 1, we have
\begin{align*}
    \left|\hat{\beta}_i^{(m_0,n)}(p_j) - \beta_i(p_j) \right| < \frac{\varepsilon}{2}, \qquad \forall j\in\{1,2,\ldots,m_0\}.
\end{align*}
Let $p\in[\eta,1-\eta]$. Therefore, $p\in[p_j,p_{j+1}]$ for some $j\in\{1,2,\ldots,m_0\}$. 
There are two possible arrangements of $\hat{\beta}_i^{(m_0,n)}(p_j),\hat{\beta}_i^{(m_0,n)}(p),$ and $\hat{\beta}_i^{(m_0,n)}(p_{j+1})$, namely \\
a) $\hat{\beta}_i^{(m_0,n)}(p_j) \leq \hat{\beta}_i^{(m_0,n)}(p) \leq \hat{\beta}_i^{(m_0,n)}(p_{j+1})$; \\
b) $\hat{\beta}_i^{(m_0,n)}(p_j) \geq \hat{\beta}_i^{(m_0,n)}(p) \geq \hat{\beta}_i^{(m_0,n)}(p_{j+1})$. \\
In case a) for every $n>n_0$, with probability 1 we have 
\begin{align*}
    \hat{\beta}_i^{(m_0,n)}(p_j) 
    \geq {\beta}_i(p_j) - \frac{\varepsilon}{2}
    > \beta_i(p) - \varepsilon
\end{align*}
and
\begin{align*}
    \hat{\beta}_i^{(m_0,n)}(p_{j+1}) 
    \leq {\beta}_i(p_{j+1}) + \frac{\varepsilon}{2}
    < \beta_i(p_j) + \varepsilon < \beta_i(p) + \varepsilon.
\end{align*}
These two inequalities, together with the assumption a), give us, with probability 1, that
\begin{align*}
    \beta_i(p) - \varepsilon < \hat{\beta}_i^{(m_0,n)}(p) < \beta_i(p) + \varepsilon.
\end{align*}
In case b) for every $n>n_0$, with probability 1 we have
\begin{align*}
    \hat{\beta}_i^{(m_0,n)}(p_j) 
    < {\beta}_i(p_j) + \frac{\varepsilon}{2}
    < \beta_i(p) + \varepsilon
\end{align*}
and
\begin{align*}
    \hat{\beta}_i^{(m_0,n)}(p_{j+1}) 
    \geq {\beta}_i(p_{j+1}) - \frac{\varepsilon}{2}
    \geq \beta_i(p_j) - \varepsilon.
\end{align*}
These two inequalities, together with the assumption b), give us, with probability 1, that
\begin{align*}
    \beta_i(p) - \varepsilon < \hat{\beta}_i^{(m_0,n)}(p) < \beta_i(p) + \varepsilon.
\end{align*}
Since this was shown for any $p\in[\eta,1-\eta]$, for all $n>n_0$, with probability 1 we have 
\begin{align*}
    \sup_{p \in [\eta,1-\eta]} \left| \hat{\beta}_i^{(m,n)}(p) - \beta_i(p) \right| \leq \varepsilon,
\end{align*}
which ends the proof.\\
(ii) Since each $\beta_i$ is uniformly continuous on an interval $[\eta,1-\eta]$, for every $\varepsilon_m>0$ there exists $m\in\mathbb{N}$ such that there exists $\delta\geq\frac{1}{m}>0$ for which $|p_1-p_2|<\delta \implies |\beta_i(p_1)-\beta_i(p_2)|<\varepsilon_m $, for every $p_1,p_2\in[\eta,1-\eta]$.
Moreover, we have
\begin{align*}
&\sup_{p \in [\eta,1-\eta]} \left| \tilde{\beta}_i^{(m,n)}(p) - \beta_i(p) \right|  \\
=&\max_{j\in\{1,\ldots,m-1\}}\sup_{p \in [p_j-p_{j+1}]}
\left| \tilde{\beta}_i^{(m,n)}(p) - \tilde{\beta}_i^{(m,n)}(p_j) + 
\tilde{\beta}_i^{(m,n)}(p_j) - \beta_i(p) \right|
\end{align*}
which is smaller or equal to
\begin{align}\label{eq:sups}
\max_{j\in\{1,\ldots,m-1\}}\sup_{p \in [p_j-p_{j+1}]} \left| \tilde{\beta}_i^{(m,n)}(p) - 
\tilde{\beta}_i^{(m,n)}(p_j)  \right|
+ 
\max_{j\in\{1,\ldots,m-1\}}\sup_{p \in [p_j-p_{j+1}]} \left| \tilde{\beta}_i^{(m,n)}(p_j) - \beta_i(p) \right|.
\end{align}
Since $\tilde{\beta}_i^{(m,n)}(p)$ is the linear interpolation between points $\left(p_j,\tilde{\beta}_i^{(m,n)}(p_j)\right)$ for $j\in\{1,\ldots,m-1\}$, the first component in \eqref{eq:sups} is smaller or equal to 
\begin{align*}
\max_{j\in\{1,\ldots,m-1\}}\sup_{p \in [p_j-p_{j+1}]} \left| \tilde{\beta}_i^{(m,n)}(p_{j+1}) - 
\tilde{\beta}_i^{(m,n)}(p_j)  \right|
\end{align*}
which is smaller or equal to 
\begin{align*}
&\max_{j\in\{1,\ldots,m-1\}} \left| \tilde{\beta}_i^{(m,n)}(p_{j+1}) - 
{\beta}_i(p_{j+1})  \right|\\
&+
\max_{j\in\{1,\ldots,m-1\}} \left| {\beta}_i(p_{j+1}) - 
{\beta}_i(p_j)  \right|
+
\max_{j\in\{1,\ldots,m-1\}} \left| {\beta}_i(p_{j}) - 
\tilde{\beta}_i^{(m,n)}(p_j)  \right| 
\end{align*}
The second component in \eqref{eq:sups} is smaller or equal to 
\begin{align*}
\max_{j\in\{1,\ldots,m-1\}} \left| \tilde{\beta}_i^{(m,n)}(p_{j}) - 
{\beta}_i(p_j)  \right|
+
\max_{j\in\{1,\ldots,m-1\}}\sup_{p \in [p_j-p_{j+1}]} \left|  
{\beta}_i(p_j) 
-{\beta}_i(p)\right|.
\end{align*}
Thus 
\begin{align}\label{eq:2_eps_3_sup}
\sup_{p \in [\eta,1-\eta]} \left| \tilde{\beta}_i^{(m,n)}(p_j) - \beta(p) \right| \leq
2\varepsilon_m + 3\max_{j\in\{1,\ldots,m-1\}}
\left| \tilde{\beta}_i^{(m,n)}(p_{j}) -{\beta}_i(p_j)\right|.
\end{align}
We know by Lemma \ref{lem:lower_sup} and Theorem 3.2 from \cite{Chaudhuri1991} that $\max_{j\in\{1,\ldots,m-1\}}$ converges in probability to 0, when $m,n\to\infty$.
Hence, we have the convergence in probability of $\tilde{\beta}_i^{(m,n)}$ to $\beta_i$ on an interval $[\eta,1-\eta]$.
    (iii) By the form of $Q_x$ given in (\ref{eq:assumption}), we can see that its plug-in estimator based on $\hat{\beta}^{(m,n)}$ is an exponent of the sum of elements $\hat{\beta}_i^{(m,n)} x_i$. 
    This sum converges in probability to the sum of elements ${\beta}_i x_i$, by part (vi) of Theorem 18.10 in \cite{VanDerVaart1998}.
    Now, by the continuous mapping theorem (see, for example, Theorem 2.3 in \cite{VanDerVaart1998}), we know that
    \begin{align*}
        \hat{Q}_{x}^{(m,n)} = \exp\left(\hat{\beta}^{(m,n)}x^T\right) \xrightarrow{P}  \exp\left({\beta}x^T\right) = Q_x,
    \end{align*}
    when $m,n$ tends to the infinity.
    Analogously we can obtain the consistency of $\tilde{Q}_x^{(m,n)}$. \\
    (iv) By part (iii) and by the continuous mapping theorem (see, for example, Theorem 2.3 in \cite{VanDerVaart1998}) we obtain the convergence in probability of both plug-in estimators of $qZ(Q_{x})$, based on $\hat{Q}_x^{(m,n)}$ and $\tilde{Q}_x^{(m,n)}$, respectively, to $qZ(Q_{x})$.\\
    %
    %
    (v) From part (iv) we have
    \begin{equation}\label{e:supConvZ}
    \sup_{p\in(\eta,1-\eta)}|qZ(p;\hat{Q}_x^{(m,n)})-qZ(p;Q_x)| \xrightarrow{P} 0, \ \  when \ \ m,n\rightarrow\infty.
    \end{equation}
    The theorem follows from the inequality
    \begin{eqnarray*}
    \left|\int_{\eta}^{1-\eta}\left[qZ\left(p;\hat{Q}_x^{(m,n)}\right)-qZ(p;Q_x)\right]\,dp \right|&\leq & \sup_{p\in(\eta,1-\eta)}\left|qZ\left(p;\hat{Q}_x^{(m,n)}\right)-qZ(p;Q_x)\right|,
    \end{eqnarray*}
    and \eqref{e:supConvZ}. Analogously, we can obtain the consistency of the integral of $qZ\left(p;\tilde{Q}_x^{(m,n)}\right)$ on $(\eta,1-\eta)$.
\end{proof}

Similarly, the consistency can be proved for the IAQR estimators.
\begin{theorem}\label{thm:IAQR_conv}
Let $\hat{\beta}_{\tau}^{(m,n)}$ and $\tilde{\beta}_{\tau}^{(m,n)}$ be the estimators of $\beta$ obtained with the AQR and IAQR methods, respectively, with the approximating function $f_{\tau}$ and the smoothing parameter ${\tau}$.
Then, under the assumptions of Theorem \ref{thm:IOQR_conv}, for $\tau_n \xrightarrow{n\to\infty}0$ and $m,n\to\infty$, we have
    \begin{enumerate}[label=(\roman*)]
        \item $\hat{\beta}_{\tau_n}^{(m,n)} \xrightarrow{P} \beta$ on $[\eta,1-\eta]$ for each $\eta\in(0,\frac{1}{2})$;
        \item $\tilde{\beta}_{\tau_n}^{(m,n)} \xrightarrow{P} \beta$ on $[\eta,1-\eta]$ for each $\eta\in(0,\frac{1}{2})$;
        \item $\hat{Q}_{\tau_n,x}^{(m,n)} \xrightarrow{P} Q_{x}$ and
        $\tilde{Q}_{\tau_n,x}^{(m,n)} \xrightarrow{P} Q_{x}$ on $[\eta,1-\eta]$ for each $\eta\in(0,\frac{1}{2})$;
        \item ${qZ}\left(p;\hat{Q}_{\tau_n,x}^{(m,n)}\right) \xrightarrow{P} qZ(p;Q_{x}),$ and \\
        ${qZ}\left(p;\tilde{Q}_{\tau_n,x}^{(m,n)}\right) \xrightarrow{P} qZ(p;Q_{x})$ on $[\eta,1-\eta]$ for each $\eta\in(0,\frac{1}{2})$; 
        \item $\int_\eta^{1-\eta}{qZ}\left(p;\hat{Q}_{\tau_n,x}^{(m,n)}\right) dp\xrightarrow{P} \int_\eta^{1-\eta}qZ(p;Q_{x})dp$ and \\
        $\int_\eta^{1-\eta}{qZ}\left(p;\tilde{Q}_{\tau_n,x}^{(m,n)}\right) dp\xrightarrow{P} \int_\eta^{1-\eta}qZ(p;Q_{x})dp$;
    \end{enumerate}
\end{theorem}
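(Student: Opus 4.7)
The plan is to parallel the proof of Theorem~\ref{thm:IOQR_conv} as closely as possible, replacing the consistency input for the OQR estimator with an analogous consistency statement for the AQR estimator. Parts (iii), (iv), and (v) of the theorem will follow from parts (i) and (ii) via verbatim reuse of the continuous mapping argument and the sup-integral inequality displayed in the proof of Theorem~\ref{thm:IOQR_conv}. Part (ii) will be obtained from part (i) by combining Lemma~\ref{lem:lower_sup} (applied to $\hat{\beta}_{\tau_n,i}^{(m,n)}(p_j)$ and $\tilde{\beta}_{\tau_n,i}^{(m,n)}(p_j)$) with the monotone linear-interpolation argument used in part (ii) of Theorem~\ref{thm:IOQR_conv}; the isotonically regularised AQR interpolant has precisely the same structural properties as $\tilde{\beta}_i^{(m,n)}$, so that argument transfers directly. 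Hence the only genuinely new work lies in part (i).

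For part (i), I would first reduce uniform convergence on $[\eta,1-\eta]$ to pointwise convergence at the nodes $p_1<\cdots<p_{m_0-1}$ of a finite partition chosen so that $\beta_i(p_{j+1})-\beta_i(p_j)<\varepsilon/2$, using the two-case monotonicity argument already spelled out in part (i) of Theorem~\ref{thm:IOQR_conv}. That argument depends only on the shape of the linear interpolant and applies verbatim to $\hat{\beta}_{\tau_n}^{(m_0,n)}$. What remains is to show, for each fixed node $p_j$, that $\hat{\beta}_{\tau_n}^{(m_0,n)}(p_j)\xrightarrow{P}\beta(p_j)$ as $\tau_n\to 0$ and $n\to\infty$.

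The pointwise consistency I would obtain by comparing the AQR objective
\begin{align*}
M_n^{\tau}(\beta)=\sum_{i=1}^{n}\bigl[f_{\tau}(z_i-\beta^Tx_i)+(2p-1)(z_i-\beta^Tx_i)\bigr]
\end{align*}
with its OQR counterpart $M_n^{0}(\beta)$ obtained by replacing $f_\tau$ with $|\cdot|$. The crucial analytic input is the uniform bound $\sup_{u\in\mathbb{R}}\bigl|f_\tau(u)-|u|\bigr|\le C\tau$, which I would verify directly: a rewriting of the logarithms gives $g_\tau(u)-|u|=2\tau\log(1+e^{-|u|/\tau})\le 2\tau\log 2$, and a one-variable extremisation of $|u|(1-\tanh(|u|/\tau))$ yields a bound of the form $C'\tau$ for the contribution from $h_\tau$. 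Consequently $\sup_\beta|n^{-1}M_n^{\tau_n}(\beta)-n^{-1}M_n^{0}(\beta)|\le C\tau_n\to 0$ deterministically. Combined with the consistency of the OQR minimiser (Theorem~3.2 in~\cite{Chaudhuri1991}) and the convexity of both $M_n^{\tau_n}$ and $M_n^{0}$ in $\beta$, the standard convexity lemma for convex M-estimators yields $\arg\min_\beta M_n^{\tau_n}(\beta)\xrightarrow{P}\beta(p_j)$, as required.

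The principal obstacle I anticipate is the verification of the uniform bound $\sup_u|f_\tau(u)-|u||=O(\tau)$ together with the convexity argument that converts an $O(\tau_n)$ perturbation of the objective into a vanishing perturbation of the minimiser; everything beyond this reduces to reusing the arguments already written out in the proof of Theorem~\ref{thm:IOQR_conv}.
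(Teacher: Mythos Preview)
Your overall architecture matches the paper: parts (ii)--(v) are handled by reusing the corresponding arguments from Theorem~\ref{thm:IOQR_conv}, and part (i) is reduced to pointwise consistency of $\hat\beta_{\tau_n}(p_j)$ at the nodes of a fine partition via the same interpolation argument. The route you take for the pointwise step, however, differs from the paper's. The paper works at the population level: it introduces $G(p,\beta)=\mathbb{E}[|Z-\beta^TX|+(2p-1)(Z-\beta^TX)]$ and its smoothed analogue $G_{\tau}(p,\beta)$, shows $G_{\tau_n}\rightrightarrows G$ by dominated convergence, invokes epi-convergence theory (Rockafellar--Wets) to obtain $\beta_{\tau_n}(p)\to\beta(p)$, and then applies an M-estimator consistency theorem (van der Vaart--Wellner, Corollary~3.2.3), verifying the key uniform condition $\sup_\beta|G_{\tau_n,n}-G|\xrightarrow{P}0$ through the decomposition $|G_{\tau_n,n}-G_n|+|G_n-G|$, with the first piece bounded by $M_{\tau_n}:=\sup_u|f_{\tau_n}(u)-|u||$ and the second handled by a uniform law of large numbers. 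Your approach instead compares the empirical AQR objective directly with the empirical OQR objective via the same bound $M_{\tau_n}=O(\tau_n)$ and then invokes a convexity lemma; this avoids the epi-convergence machinery and the abstract argmax theorem, and is more elementary. One small caveat: the convexity lemma you cite requires pointwise convergence of $n^{-1}M_n^{\tau_n}(\beta)$ to a deterministic convex limit with unique minimiser, not merely consistency of $\arg\min M_n^{0}$; that limit is supplied by $n^{-1}M_n^{0}(\beta)\to G(p,\beta)$ from the law of large numbers together with your $O(\tau_n)$ bound, so the argument goes through, but it is the pointwise LLN for $M_n^{0}$ rather than Chaudhuri's result per se that is the correct ingredient at that step.
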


\begin{proof}
    (i) Let $\tau$ be the smoothing parameter of the approximating function $f_{\tau}$. Let $B\subset \mathbb{R}^{d+1}$ be a~set of possible values of $\beta$ coefficients for any $p\in(0,1)$.
    
    For $p\in(0,1)$, let $\beta(p)=\argmin_{\beta\in B}G(p,\beta)$ and $\beta_{\tau}(p)=\argmin_{\beta\in B}G_{\tau}(p,\beta)$, where
    \begin{equation*}
        G(p, \beta) = \mathbb{E}\left[|Y-\beta^TX| + (2p-1)(Y-\beta^TX)\right]
    \end{equation*}
    and
    \begin{equation}\label{e:g_tau}
        G_{\tau}(p,\beta) = \mathbb{E}\left[f_{\tau}(Y-\beta^TX)
        + (2p-1)(Y-\beta^TX)\right].
    \end{equation}
    Let $\tau_n$ be a~sequence of smoothing parameters converging to zero when $n$ tends to infinity.
    The sequence of functions $f_{\tau_n}$, which is a~sequence of smooth approximations of $|\cdot|$, is uniformly convergent to $|\cdot|$.
    Thus, $f_{\tau_n}(Y-\beta^TX)$ converges in probability to $|Y-\beta^TX|$.
    We can notice that
    \begin{align*}
        & \left|G_{\tau_n}(p,\beta) - G(p,\beta)\right| = \left| \mathbb{E}\left[f_{\tau_n}(Y-\beta^TX) - |Y-\beta^TX| \right] \right| \\
        & \leq \mathbb{E} \left| f_{\tau_n}(Y-\beta^TX) - |Y-\beta^TX| \right |.
    \end{align*}
    From the Lebesgue's dominated convergence theorem we know that
        \begin{equation}\label{e:lebesgue}
         \lim_{n\to\infty} \mathbb{E} \left| f_{\tau_n}(Y-\beta^TX) - |Y-\beta^TX| \right |
        =
        \mathbb{E} \lim_{n\to\infty} \left| f_{\tau_n}(Y-\beta^TX) - |Y-\beta^TX| \right |=0,
    \end{equation}
    for any $p$ and $\beta$ because $f_{\tau_n}(x)$ converges uniformly to $|x|$, which gives the uniform convergence of $G_{\tau_n}$ to $G$.
    From Theorem 1 in \cite{Hutson2024} we know that the minimum of $G_{\tau_n}$ exists and it is unique.
    Since for $\tau_n\to0$ by \eqref{e:lebesgue} we have $G_{\tau_n}\rightrightarrows G$ and for fixed $p$ the minima of $G_{\tau_n}(p,\beta)$, for every ${\tau_n}$, and $G(p,\beta)$ exist and are unique, the minimum of $G_{\tau_n}(p,\beta)$ converges to $G(p,\beta)$.
    Since $G_{\tau_n}$ are continuous (thus lower semicontinuous) and converge uniformly to $G$, we have the epi-convergence of $G_{\tau_n}$ to $G$ (see Theorem 7.15 in \cite{Rockafellar1998}). This epi-convergence together with the convergence of the minima gives
    $\argmin_{\beta\in B}G_{\tau_n}(X,\beta) \xrightarrow{\tau_n\to0} \argmin_{\beta\in B}G(X,\beta)$ (see Theorem 7.31 in \cite{Rockafellar1998}), thus $\beta_{\tau_n}(p)\to\beta(p)$, when $\tau_n\to0$, for any $X,Y$ and $p$.

    The AQR estimator of the parameter $\beta$ is given by
\begin{equation*}
\hat{\beta}_{\tau_n}:=\hat{\beta}_{\tau_n}(p)
= \argmin_{\beta\in B} G_{\tau_n,n}(x,y,p,\beta),
\end{equation*}
where
\begin{equation*}
G_{\tau_n,n}(x,y,p,\beta) = 
\frac{1}{n}\sum_{i=1}^n\left[f_{\tau_n}(y_i-\beta^Tx_i)+(2p-1)(y_i-\beta^Tx_i)\right],
\end{equation*}
which is an empirical counterpart of \eqref{e:g_tau}.
By point (i) of Corollary 3.2.3 in \cite{VanDerVaart1996} we know that for every $p\in(0,1)$ we have $\hat{\beta}_{\tau_n}(x,y,p)\xrightarrow[n\to\infty]{P}\beta(p)$ if the following assumptions are fulfilled:
\begin{enumerate}[label=(\alph*)]
    \item the minimum of $G$ with respect to $\beta$ exists and is unique,
    \item $G_{\tau_n,n}(x,y,p,\hat{\beta}_{\tau_n}) \leq 
    \inf_{\beta\in B} G_{\tau_n,n}(x,y,p,\beta) + o_P(1)$,
    \item $\sup_{\beta\in B} |G_{\tau_n,n}(x,y,p,\beta) - G(p,\beta)| \xrightarrow[n\to\infty]{P} 0$.
\end{enumerate}
The assumption (a) is held because $G$ is a~continuous and convex function. Moreover, (b) is fulfilled since 
$\hat{\beta}_{\tau_n} = 
\inf_{\beta\in B} G_{\tau_n,n}(x,y,p,\beta)$.
In order to show (c), we define an empirical counterpart of $G$, namely
\begin{equation*}
G_{n}(x,y,p,\beta) = 
\frac{1}{n}\sum_{i=1}^n\left[\left|y_i-\beta^Tx_i\right|+(2p-1)(y_i-\beta^Tx_i)\right].
\end{equation*}
We can notice that
\begin{align*}
&\sup_{\beta\in B} |G_{\tau_n,n}(x,y,p,\beta) - G(p,\beta)| \\
\leq&
\sup_{\beta\in B} |G_{\tau_n,n}(x,y,p,\beta) - G_{n}(x,y,p,\beta)| +
\sup_{\beta\in B} |G_{n}(x,y,p,\beta) - G(p,\beta)| \\
\leq&
\sup_{\beta\in B} \left|\frac{1}{n}\sum_{i=1}^n 
\left[
f_{\tau_n}(\ln y_i + \beta^T x_i) - |\ln y_i + \beta^T x_i| 
\right]
\right| \\
+&
\sup_{\beta\in B} \left|\frac{1}{n}\sum_{i=1}^n 
|\ln y_i + \beta^T x_i| - \mathbb{E}|\ln y_i + \beta^T x_i|
\right|.
\end{align*}
Since $f_{\tau_n}$ converges uniformly to the absolute value function, for every $f_{\tau_n}$ there exists a~constant $M_{\tau_n}$ such that
$\sup_{u\in\mathbb{R}^d}|f_{\tau_n}(u)-|u||\leq M_{\tau_n}$. 
In case of our sequence $f_{\tau_n}$, this value is obtained for $u=0$ for every ${\tau_n}$. Naturally, $M_{\tau_n}\xrightarrow[]{n\to\infty} 0$.
The first term is thus bounded by $M_{\tau_n}$ which converges to 0.
The second term converges to 0 by the Uniform Law of Large Numbers (see, for example, Lemma 4.2 in \cite{Newey1994}). 
Thus, we have a~convergence in probability of AQR estimator  $\hat{\beta}_{\tau_n}(p)$ to ${\beta}(p)$ for every $p\in(0,1)$.

    (ii) From this point on, the proof of the convergence of IAQR estimators $\tilde{\beta}_{\tau_n}$ based on AQR estimators $\hat{\beta}_{\tau_n}(x,y,p)$ is the same as in point (ii) of Theorem \ref{thm:IOQR_conv}. 
The proofs of parts (iii), (iv) and (v) follow the same pattern as in Theorem \ref{thm:IOQR_conv}.
\end{proof}

Since we impose that the plug-in estimators of the conditional inequality curves are equal to their theoretical values at the points $p=0$ and $p=1$, the strong convergence of the estimators of $\beta$ in the neighbourhood of 0 and 1 is not required in terms of estimating the inequality measures.

\begin{corollary}
     Properties analogous to those established for the estimators of conditional $qZ$ and $qZI$ in Theorems \ref{thm:IOQR_conv} and \ref{thm:IAQR_conv} also apply to the estimators of conditional $qD$ and $qDI$.
\end{corollary}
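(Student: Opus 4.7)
The plan is to observe that $qD$ and $qZ$ share the same structural form: both are equal to $1$ minus a ratio of two quantile values $Q_x(a(p))/Q_x(b(p))$, where in the $qZ$ case $(a,b)=(p/2,\,1/2+p/2)$ and in the $qD$ case $(a,b)=(p/2,\,1-p/2)$. For $p\in[\eta,1-\eta]$ with $\eta\in(0,1/2)$, both arguments $p/2$ and $1-p/2$ lie in the compact sub-interval $[\eta/2,\,1-\eta/2]\subset(0,1)$. Consequently, the convergence results established for $\hat\beta^{(m,n)}$, $\tilde\beta^{(m,n)}$ (resp.\ $\hat\beta_{\tau_n}^{(m,n)}$, $\tilde\beta_{\tau_n}^{(m,n)}$) on every compact sub-interval of $(0,1)$, together with the plug-in representations of $\hat Q_x^{(m,n)}$ and $\tilde Q_x^{(m,n)}$, automatically deliver the analogous statements for $qD$.

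First, I would note that parts (i)--(iii) of Theorems \ref{thm:IOQR_conv} and \ref{thm:IAQR_conv} do not refer to $qZ$ at all, so they hold verbatim and can be invoked directly. In particular, $\hat Q_x^{(m,n)}\xrightarrow{P}Q_x$ and $\tilde Q_x^{(m,n)}\xrightarrow{P}Q_x$ uniformly on $[\eta/2,\,1-\eta/2]$ for every $\eta\in(0,1/2)$ (and analogously in the AQR/IAQR case with $\tau_n\to 0$).

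Next, for the analog of part (iv), I would apply the continuous mapping theorem exactly as in the original proof, but to the map $q\mapsto 1-q(p/2)/q(1-p/2)$ instead of $q\mapsto 1-q(p/2)/q(1/2+p/2)$. Since $Q_x$ is continuous and strictly positive on $[\eta/2,\,1-\eta/2]$ (by the model assumption that $Q_x(p)=\exp[\beta_0(p)+\sum_i\beta_i(p)x_i]>0$), the denominator stays bounded away from zero on any such sub-interval, so the ratio is a continuous functional of $Q_x$ in the uniform topology on $[\eta/2,\,1-\eta/2]$. Therefore $qD(p;\hat Q_x^{(m,n)})\xrightarrow{P}qD(p;Q_x)$ and similarly for $\tilde Q_x^{(m,n)}$, uniformly on $[\eta,1-\eta]$.

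Finally, the analog of part (v) follows from the same inequality used in the original proof: for any $\eta\in(0,1/2)$,
\begin{equation*}
\left|\int_{\eta}^{1-\eta}\!\bigl[qD(p;\hat Q_x^{(m,n)})-qD(p;Q_x)\bigr]\,dp\right|
\le \sup_{p\in[\eta,1-\eta]}\bigl|qD(p;\hat Q_x^{(m,n)})-qD(p;Q_x)\bigr|,
\end{equation*}
and the right-hand side tends to $0$ in probability by the $qD$-analog of (iv); the same argument applies to $\tilde Q_x^{(m,n)}$ and, in the AQR/IAQR setting, to $\hat Q_{\tau_n,x}^{(m,n)}$ and $\tilde Q_{\tau_n,x}^{(m,n)}$. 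No genuinely new estimates are required — the only real obstacle is the bookkeeping of changing the quantile order $1/2+p/2$ to $1-p/2$ and verifying that this new argument still ranges over a compact sub-interval of $(0,1)$ on which the consistency results of Theorems \ref{thm:IOQR_conv} and \ref{thm:IAQR_conv} apply, which is immediate.
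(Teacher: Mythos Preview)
Your proposal is correct and matches the paper's own treatment: the corollary is stated without a separate proof because the arguments of Theorems~\ref{thm:IOQR_conv} and~\ref{thm:IAQR_conv} carry over verbatim once $1/2+p/2$ is replaced by $1-p/2$, exactly as you outline. Your explicit check that $p/2$ and $1-p/2$ remain in a compact sub-interval of $(0,1)$ for $p\in[\eta,1-\eta]$ is the only bookkeeping needed, and you have it right.
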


\section{Simulation study}\label{sec:simulation_study}
A~simulation study was conducted, to compare the performance of plug-in estimators of conditional $qZI$ and $qDI$.

The samples in this experiment are drawn from the flattened logistic distribution (FLD), described in the following, and then exponentiated to obtain nonnegative values.
OQR estimators of $\beta(p_j)$, for each $p_j$ were computed using \textit{QuantileRegressor} from a~Python library \textit{sklearn.linear\_model}.
Isotonic regression is implemented in specific libraries in both Python and R.
Here, the function \textit{IsotonicRegression} from \textit{sklearn.isotonic} library in Python was used.
Constrained quantile regression was performed in~R~using \textit{rq} function from \textit{quantreg} library with the option \textit{method="fnc"}.
The integrals needed to estimate the conditional indices $qZI$ and $qDI$ are computed numerically using Simpson's rule.
The errors of the estimators, understood as the differences between the estimators of the conditional index and its true value, are illustrated  in the boxplots in Figures \ref{fig:errors_100_5_1_1} -- \ref{fig:errors_500_5_1_5}, while their MSEs are compared in Figures \ref{fig:mse_heatmap} and \ref{fig:mse_heatmap_values}.

\subsection{Flattened logistic distribution (FLD)}\label{sec:fld}
Sharma and Chakrabarty introduced FLD (see formula (1.3) in \cite{Sharma2019}). Its qf is given by the formula
\begin{equation}\label{e:q_fld}
    Q(p)=\alpha + \beta\left[ \log\left(\frac{p}{1-p}\right) + \kappa p \right],
\end{equation}
where $\alpha\in\mathbb{R}$ is the location parameter $\beta>0$ is the scale parameter and $\kappa>0$ is the shape parameter regulating the flatness of the peak of the density function of this distribution.

We will denote the distribution with the quantile function \eqref{e:q_fld} by $FLD(\alpha,\beta,\kappa)$.
In the simulation study we assume that $Z|X=x$, i.e. $\ln Y|X=x$ has the following quantile function 
\begin{equation*}
    Q_x(p):=Q(p|X=x) = \beta_0(p) + \beta_1(p)x,
\end{equation*}
where
\begin{equation*}
    \beta_0(p) = \alpha+\beta[\log(p)-\log(1-p)]
    \qquad
    \text{and}
    \qquad
    \beta_1(p) = \beta \gamma p.
\end{equation*}

The support of FLD is $\mathbb{R}$. For this reason, if $Z$ is a~r.v. from $FLD(\alpha,\beta,\kappa)$, we can consider a~r.v. $Y=\exp(Z)$, which has only nonnegative values.
We will denote the distribution of $Y$ by $EFLD(\alpha,\beta,\kappa)$.
The scatterplot of the example of data generated from $FLD(\alpha,\beta,\kappa)$, where $\kappa$ depends on $x$ is presented in Figure \ref{fig:scatter_plot_fld}, while the scatterplot of its exponent ($EFLD(\alpha,\beta,\kappa)$) is depicted in Figure \ref{fig:scatter_plot_exp_fld}. It can be seen that the variance of FLD increases with increasing $x$.
Sharma and Chakrabarty \cite{Sharma2019} showed that the mean and variance of $FLD(\alpha,\beta,\kappa)$ are given by
\begin{equation*}
    \alpha + \frac{\beta \kappa}{2} \qquad \text{and} \qquad
    \beta^2 \left( \kappa + \frac{\kappa^2}{12} + \frac{\pi^2}{3} \right),
\end{equation*}
respectively. 
The following result provides the formula for the $r$-th moment of $EFLD(\alpha, \beta$ and $\kappa)$, which leads to the formulas for its mean and variance.
\begin{lemma}\label{lem:moments_efld}
    If $Y\sim \text{EFLD}(\alpha, \beta, \kappa)$ and $\beta r < 1$, then the $r$-th moment is given by
    \begin{equation*}
        m_r := \mathbb{E} (Y^r) = \exp(r\alpha)
        _1F_1(1+r\beta, 2; r\beta\kappa)
        \frac{\pi\beta r}{\sin(\pi\beta r)},
    \end{equation*}
where $_1F_1(a, b; z)$ is the confluent hypergeometric function of the first kind.
If $\beta r \geq 1$, then the $r$-th moment is not finite.
\end{lemma}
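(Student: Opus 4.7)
The plan is to compute $m_r = \mathbb{E}(Y^r) = \mathbb{E}[\exp(rZ)]$ where $Z \sim FLD(\alpha,\beta,\kappa)$. By the probability integral transform, if $P$ is uniform on $(0,1)$ then $Q(P)$ has the same distribution as $Z$, so plugging in the explicit form \eqref{e:q_fld} of $Q$ and simplifying gives
\[
m_r = \int_0^1 e^{rQ(p)}\,dp = e^{r\alpha}\int_0^1 \left(\frac{p}{1-p}\right)^{r\beta} e^{r\beta\kappa p}\,dp = e^{r\alpha}\int_0^1 p^{r\beta}(1-p)^{-r\beta} e^{r\beta\kappa p}\,dp.
\]
This reduces the problem to evaluating an elementary-looking beta-type integral with an exponential factor.

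The next step is to recognize this as Kummer's integral representation of the confluent hypergeometric function,
\[
{}_1F_1(a,b;z) = \frac{\Gamma(b)}{\Gamma(a)\Gamma(b-a)}\int_0^1 e^{zt} t^{a-1}(1-t)^{b-a-1}\,dt,
\]
valid whenever $\mathrm{Re}(b) > \mathrm{Re}(a) > 0$. Matching parameters forces the choice $a = 1 + r\beta$, $b = 2$, $z = r\beta\kappa$; the validity condition $0 < a < b$ becomes $-1 < r\beta < 1$, which for $r > 0$ is guaranteed by the hypothesis $r\beta < 1$.

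Solving that representation for the integral and substituting back yields
\[
m_r = e^{r\alpha}\,\Gamma(1+r\beta)\Gamma(1-r\beta)\,{}_1F_1(1+r\beta,2;r\beta\kappa),
\]
and finally the Euler reflection formula $\Gamma(r\beta)\Gamma(1-r\beta) = \pi/\sin(\pi r\beta)$ combined with $\Gamma(1+r\beta) = r\beta\,\Gamma(r\beta)$ gives $\Gamma(1+r\beta)\Gamma(1-r\beta) = \pi r\beta/\sin(\pi r\beta)$, producing the stated closed form.

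For the divergence statement when $r\beta \geq 1$, I would note that near $p = 1$ the integrand behaves like $(1-p)^{-r\beta}$ up to factors bounded away from $0$ and $\infty$, and $\int^1 (1-p)^{-r\beta}\,dp$ diverges whenever $r\beta \geq 1$. The only genuine content of the proof is the recognition of Kummer's integral and the Gamma-function bookkeeping; everything else is routine substitution, so I do not anticipate any significant obstacle.
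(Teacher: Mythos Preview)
Your proof is correct and follows essentially the same route as the paper: both reduce $m_r$ to the integral $e^{r\alpha}\int_0^1 p^{r\beta}(1-p)^{-r\beta}e^{r\beta\kappa p}\,dp$, identify it with $\Gamma(1+r\beta)\Gamma(1-r\beta)\,{}_1F_1(1+r\beta,2;r\beta\kappa)$, and finish with the reflection formula. The only cosmetic difference is that the paper arrives at ${}_1F_1$ by expanding $e^{r\beta\kappa p}$ as a Taylor series and integrating term by term against the Beta kernel, whereas you invoke Kummer's integral representation directly; both are standard and equivalent.
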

\begin{proof}
Using Taylor's expansion of exponent and changing the order of the integration and summation it is easy to show that for $\beta r < 1$, the $r$-th moment is given by
    \begin{align}\label{eq:proof_efld_moments}
        m_r & = \mathbb{E} (Y^r) =
        \int_0^1 [Q_Y(p)]^r \, dp =
        \int_0^1 \exp(r\alpha) p^{r\beta}(1-p)^{-r\beta} \exp(\beta\kappa p) \, dp \\
        \nonumber & = \exp(r\alpha) \Gamma(1-r\beta) \Gamma(1+r\beta) _1F_1(1+r\beta, 2; r\beta\kappa) \\
        \nonumber & = \exp(r\alpha) _1F_1(1+r\beta, 2; r\beta\kappa)
        \frac{\pi\beta r}{\sin(\pi\beta r)}.
    \end{align}
    When $\beta r \geq 1$, the integrand in the expression \eqref{eq:proof_efld_moments} is larger than $p^{r\beta}/(1-p)$ for each $p\in[0,1]$, hence the integral does not converge.
\end{proof}

\begin{corollary}
    If $Y\sim \text{EFLD}(\alpha, \beta, \kappa)$ and $\beta < 1$, then
    \begin{align*}
        \mathbb{E}(Y) = \exp(\alpha) _1F_1(1+\beta, 2; \beta\kappa) \frac{\pi\beta}{\sin(\pi\beta)}
    \end{align*}
    and the variance $Var(Y)$ is $m_2-m_1^2$, where $m_1$ and $m_2$ are defined as in Lemma \ref{lem:moments_efld}.
\end{corollary}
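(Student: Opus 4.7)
The statement is a direct specialisation of Lemma \ref{lem:moments_efld}, so the proof plan consists essentially of substitution. First, I would take $r=1$ in Lemma \ref{lem:moments_efld}: the hypothesis $\beta<1$ of the corollary gives $\beta r = \beta < 1$, which is precisely the required integrability condition, so the formula for $m_r$ provided by the lemma applies. Reading off the case $r=1$ then yields
\[
\mathbb{E}(Y)=m_1=\exp(\alpha)\,{}_1F_1(1+\beta,\,2;\,\beta\kappa)\,\frac{\pi\beta}{\sin(\pi\beta)},
\]
which is the first conclusion verbatim.

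Second, for the variance I would invoke the standard identity $Var(Y)=\mathbb{E}(Y^2)-[\mathbb{E}(Y)]^2$. Writing $\mathbb{E}(Y^2)=m_2$ (with $m_2$ as supplied by Lemma \ref{lem:moments_efld} with $r=2$) and $\mathbb{E}(Y)=m_1$, the identity rearranges immediately to $Var(Y)=m_2-m_1^2$, matching the claim.

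There is no genuine obstacle here: the corollary is a one-line consequence of the preceding lemma together with the elementary definition of variance. The only point worth noting is that the moments invoked must actually be finite, i.e., the integrability condition $\beta r<1$ of Lemma \ref{lem:moments_efld} has to hold both for $r=1$ (ensured by the standing assumption $\beta<1$) and, implicitly, for $r=2$ whenever the second-moment expression is evaluated; the corollary statement phrases the variance purely in terms of $m_1$ and $m_2$ so that this is absorbed into the notation.
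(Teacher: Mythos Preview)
Your proposal is correct and matches the paper's intent: the corollary is stated there without proof, as an immediate specialisation of Lemma~\ref{lem:moments_efld} with $r=1$ together with the elementary identity $Var(Y)=m_2-m_1^2$. Your observation about the finiteness condition $\beta r<1$ for $r=2$ is also apt; the paper handles this only implicitly through the notation $m_2$.
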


The conclusion from these results is that the mean of EFLD is not finite when $\beta\geq1$ and the variance is not finite when $\beta\geq0.5$.
Thus, for $\beta\geq1$, it is not recommended to use the Gini index or other measures defined with the mean value, since they are not well defined.


For the $EFLD(\alpha,\beta,\kappa)$ with $\kappa=\gamma x$, the $qZ$ and $qD$ are given by
\begin{align*}
& qZ(p;Q_Y)=1-\left[\frac{p(1-p)}{(1+p)(2-p)}\right]^\beta \exp \left(- \beta \gamma x / 2\right), \\
& qD(p;Q_Y)=1-\left[\frac{p}{2-p}\right]^{2 \beta} \exp \left[\beta \gamma x(p-1)\right].
\end{align*}

\par\noindent
\begin{minipage}[b]{0.45\textwidth}
\begin{figure}[H]
    \centering
    \includegraphics[width=\textwidth]{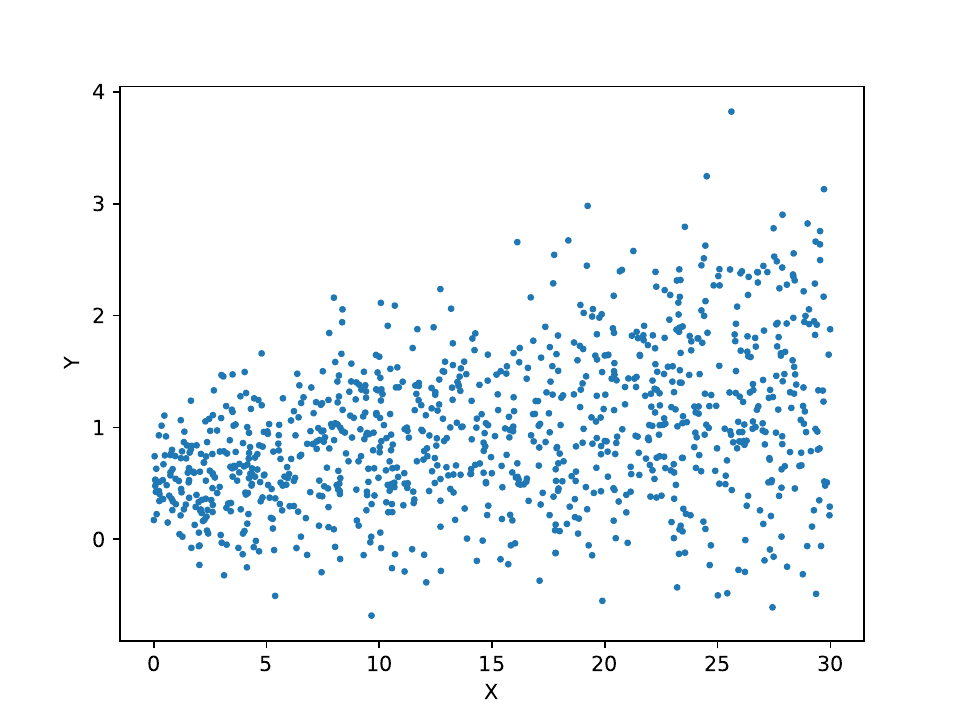}
    \caption{Scatterplot of data generated from FLD$(0.5, 0.2, 0.3 x)$, where $x\in(0,30)$}
    \label{fig:scatter_plot_fld}
\end{figure}
\end{minipage}
\hfill
\begin{minipage}[b]{0.45\textwidth}
\begin{figure}[H]
    \centering
    \includegraphics[width=\textwidth]{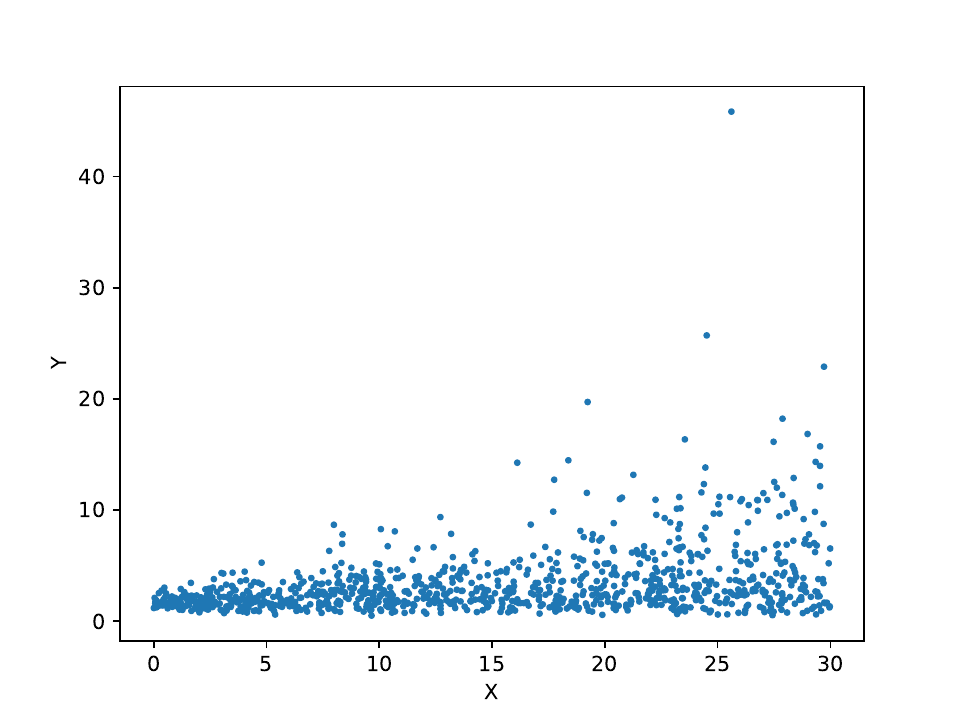}
    \caption{Scatterplot of the exponent of the data drawn from FLD depicted in Figure \ref{fig:scatter_plot_fld}}
    \label{fig:scatter_plot_exp_fld}
\end{figure}
\end{minipage}
\par\vspace{1ex}

\subsection{Results of the simulation study}

We compare the finite sample accuracy of the plug-in estimators of conditional $qZI$ and $qDI$ based on methods described in Section \ref{sec:measures} to obtain the conditional quantile function.
The BK method, described in Section \ref{sec:oqr}, does not guarantee the monotonicity of $Q_x$ with respect to $p$.
It is guaranteed though by BRW, WL1, and WL2. 
Since for $d=1$ the WL2 method produces estimators very similar to the WL1 method, its results will not be presented in the description of the results.
The CQR, IOQR, and IAQR methods also guarantee the monotonicity of functions $\beta_i$ with respect to $p$, for each $i\in\{0,\ldots,d\}$.
The default value of the parameter $\tau$ used in these simulations to obtain the IAQR estimates was
\begin{equation}
    \hat{\tau} = \frac{1}{\sqrt{n}} \left[\widehat{IQR}(Y|X=\bar{x})\right],
\end{equation}
where $n$ is the number of observations, $\bar{x}$ is the sample mean of the covariate $X$ and $\widehat{IQR}(Y|X=x)$ is the estimator (obtained with quantile regression applied only to two quantiles of orders $0.25$ and $0.75$, without applying isotonic regression) of interquartile range, i.e. the difference between quantile of order $0.75$ and $0.25$, when the value of covariate $X$ is equal to some $x$.
%
The value of $\tau$ was arbitrarily chosen and it is possible to obtain an estimator with a~lower MSE for a~different value of $\tau$.
Simpson's rule is used to numerically calculate the values of the plug-in estimators of the conditional inequality measures $qZI$ and $qDI$. For each set of parameters $5000$ Monte Carlo repetitions were performed with samples of sizes $n\in\{50, 100, 500, 1000\}$.

The exact true values of the indices in all cases considered in the simulation study are given in Figure \ref{fig:qzi_qdi_true_values}.
The higher values of the parameter $\gamma$ lead to a~larger range of values of the parameter $\kappa$ which indicates a~larger range of the inequality measures values, for example for $\alpha=0.5, \beta=0.2$ if $\gamma=0.1$ then $qDI$ varies from $0.377$ to $0.5$, while if $\gamma=0.3$ then it varies from $0.387$ to $0.659$, when $x$ varies from 1 to 30. 
The range of inequality measures can also be increased by increasing the range of covariates $X$, which for the sake of feasibility of this simulation study was set to be between 0 and 30 for each set of parameters considered in this paper.

\begin{figure}
    \centering
    \includegraphics[width=\textwidth]{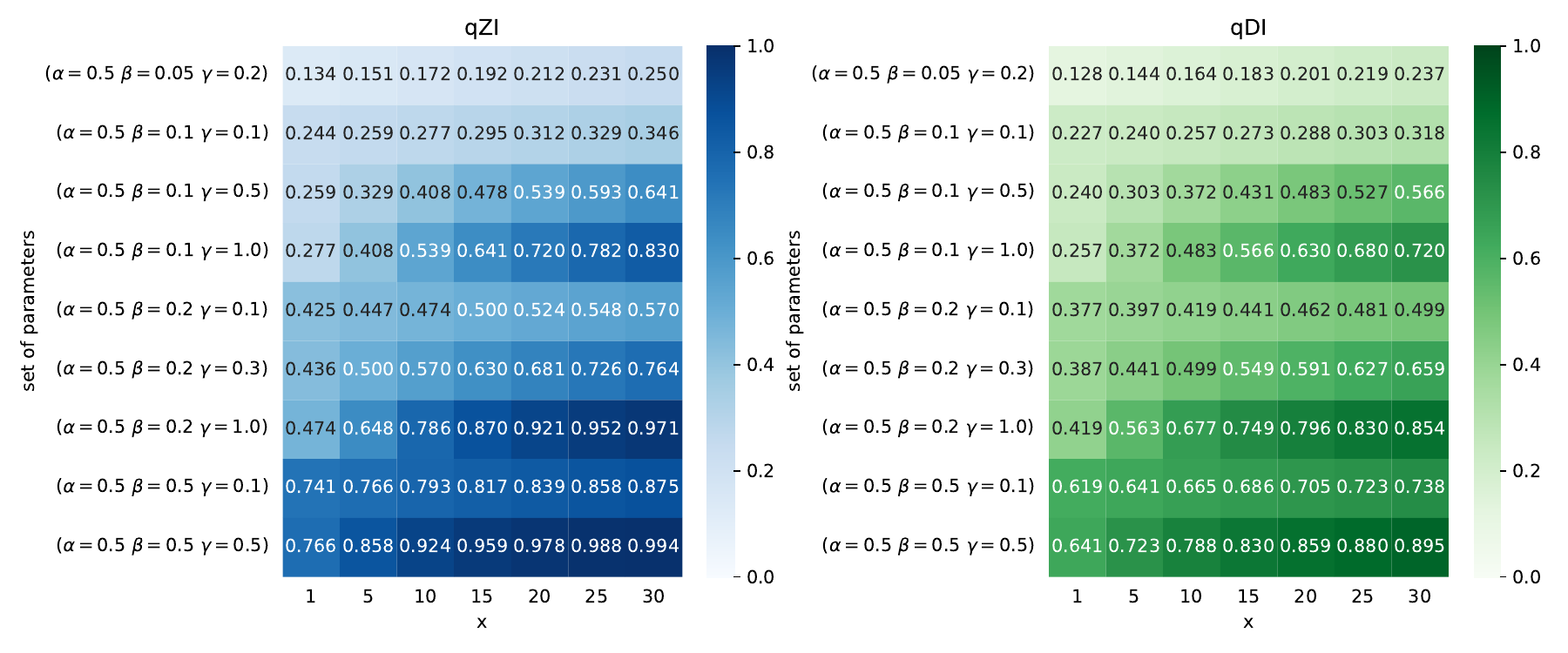}
    \caption{True values of the indices $qZI$ and $qDI$ of $EFLD$, for nine different sets of parameters and $x$ varying between 1 and 30
    }
    \label{fig:qzi_qdi_true_values}
\end{figure}

Figure \ref{fig:mse_heatmap} contains a~comparison of MSE of plug-in estimators of both conditional indices $qZI$ and $qDI$ obtained with six methods of estimating the conditional quantile function. 
For each combination of $n$ and the parameters $(\alpha,\beta,\gamma)$ of $FLD$, an element of the heatmap represents the ratio of MSE of a~particular method and the best method for that $n$ and $(\alpha,\beta,\gamma)$.
The order of the sets of parameters $(\alpha,\beta,\gamma)$ in each heatmap is the same as in Figure \ref{fig:qzi_qdi_true_values}.

It can be seen in Figure \ref{fig:mse_heatmap} that for $n=50$ in many cases the estimators IAQR and CQR have a~lower MSE than other methods. 
Although in many cases the methods BRW, WL1 and BK give similar or slightly better results (in terms of MSE), for several cases they are significantly worse, specifically when the values of the estimated index are large for every $x$.
IOQR has slightly higher MSE than IAQR in most cases.
For $n=100$ the results are similar to those of $n=50$, although the differences between the methods are smaller.
For lower values of $\beta$, IAQR is preferred, while CQR has the lowest MSE for higher $\beta$.
The BRW, WL1, and BK methods have significantly higher MSE for high $x$, when the sample size is small. 
In Figure \ref{fig:errors_50_5_5_5} we can see that the results obtained with the IAQR and IOQR methods have fewer outliers and lower variance. 
Thus, in this cumbersome case where $n$ is small and the conditional concentration indices are high, these methods are more stable than their competitors.

For large $n$ (500 and 1000), the differences between the estimators are small in terms of MSE.
The differences between methods are more relevant for $x$ close to the boundary values (0 and 30), while they are much smaller in the middle of its range ($x=15$).
Figure \ref{fig:mse_heatmap_values} shows the values of MSE for each method (except WL2), each combination of parameters considered in this study, and for sample sizes 50 and 100. 
For $n\in\{500,1000\}$ MSE are relatively small and have similar values for all methods, thus we only display values of MSE for $n\in\{50,100\}$.
It can be seen in this figure that the heatmap showing MSE of the IAQR estimator contains fewer dark areas; hence it is less prone to have a~high level of MSE.

Figures \ref{fig:errors_100_5_1_1}, 
\ref{fig:errors_50_5_5_5} and \ref{fig:errors_500_5_1_5}
show boxplots of the differences between the true values of the indices $qDI$ (top) and $qZI$ (bottom) and their estimates obtained using IOQR, IAQR, BK, BRW, WL1, and CQR as previously defined for several combinations of sample size $n$ and parameters $(\alpha,\beta,\gamma)$ of $FLD$.
Similar figures for the other combinations of $n$ and $(\alpha,\beta,\gamma)$ can be found in a~github repository \url{https://github.com/skfp/conditional_measures}.
The values on the $x$ axis indicate the value of the covariate $X$ for which conditional $qZI(Q_x)$ and $qDI(Q_x)$ were computed.

\begin{figure}
    \centering
    \includegraphics[width=\textwidth]{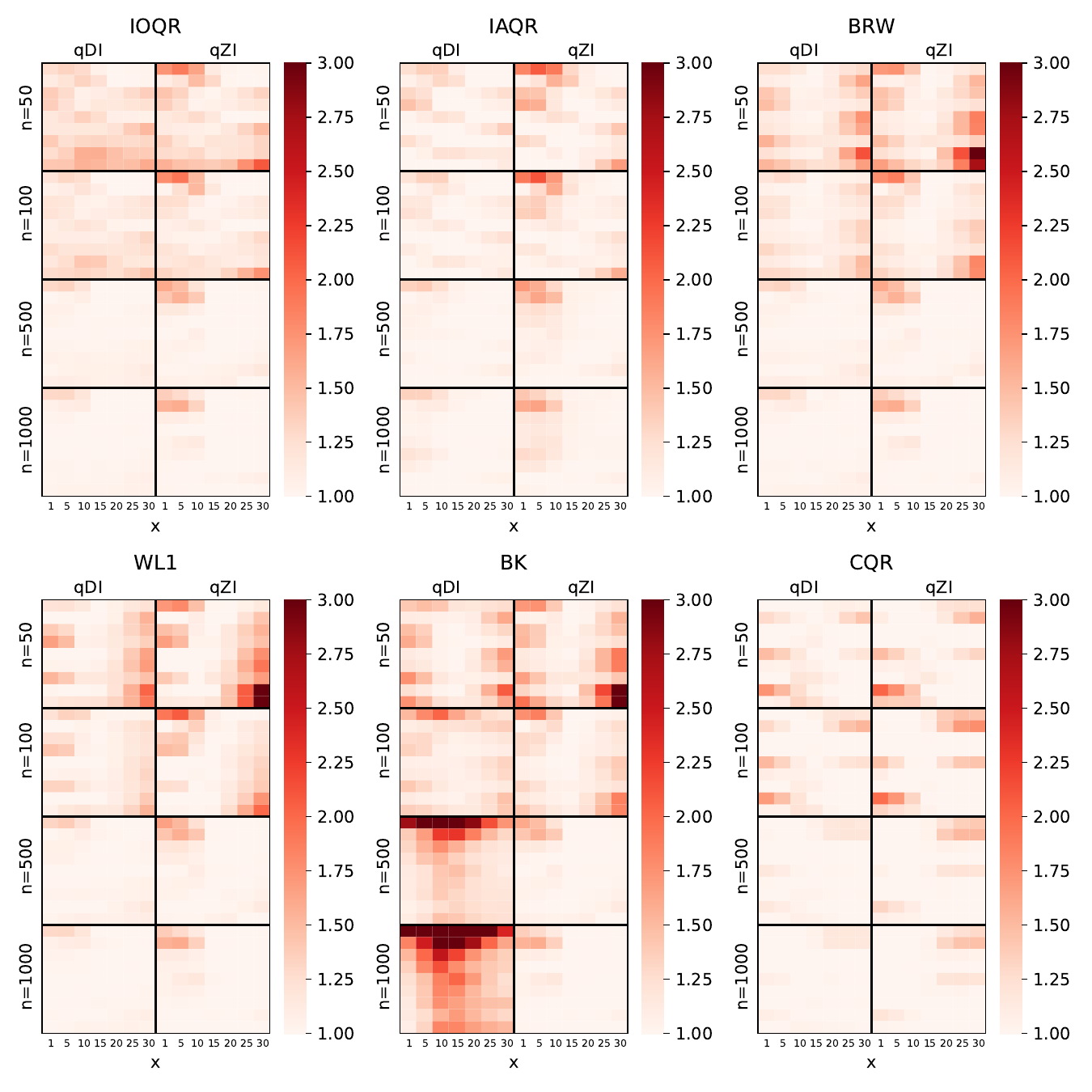}
    \caption{Ratio of MSE of the plug-in estimator of the conditional inequality measures of samples drawn from $EFLD$ for samples of size 50, 100, 500 and 1000 and for nine combinations of parameters of $EFLD$), obtained with certain method, compared to the lowest MSE in each case. The order of the sets of parameters is the same as in Figure \ref{fig:qzi_qdi_true_values}
    }
    \label{fig:mse_heatmap}
\end{figure}

Figure \ref{fig:errors_100_5_1_1} shows an example of a~case when the CQR estimator underestimates the values of the indices when the parameter $\kappa$ (and consequently the inequality) is low, while it overestimates the inequality measures for high $\kappa$.
The estimators based on the methods BK, BRW, and WL1 behave similarly to IOQR and IAQR and it is difficult to choose the best method based on the boxplots. 
Naturally, IAQR estimators tend to IOQR estimators when $\tau$ tends to 0, thus for larger $n$ and smaller $\widehat{IQR}$ (for $\bar{x}$) the IOQR and IAQR results will be almost identical.

\begin{figure}
    \centering
    \includegraphics[width=\textwidth]{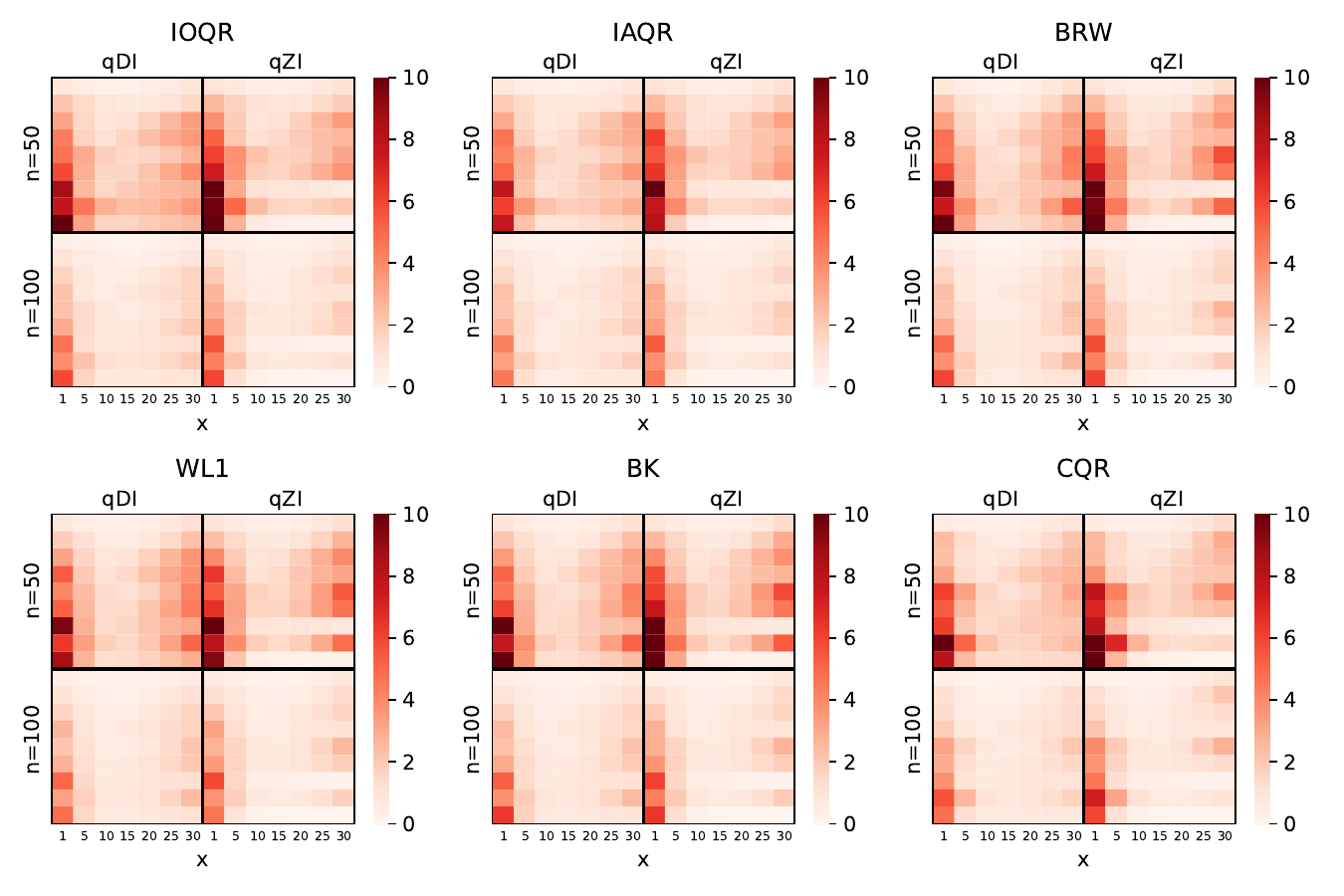}
    \caption{MSE of the estimators of conditional $qZI$ and $qDI$ of samples drawn from $EFLD$ for samples of size 50 and 100 and for nine combinations of parameters of $EFLD$. The order of the sets of parameters is the same as in Figure \ref{fig:qzi_qdi_true_values}}
    \label{fig:mse_heatmap_values}
\end{figure}

For $n=100$ in most cases considered in this simulation study, the IAQR method gives a~relatively small MSE when estimating both $qZI$ and $qDI$. 
WL1 estimators of both inequality measures have a~small MSE for small $\kappa$, comparable to IAQR, but for larger $\kappa$ its MSE is larger than that of IAQR and similar to that of BRW.
However, CQR has the lowest MSE in the case when inequalities are large, but the range of the values of the parameter $\kappa$ is narrow.
For $n\in\{500,1000\}$, the IAQR estimator exploded in several cases, resulting in extremely inaccurate values of the estimators of the indices. However, it should be noted that this occurred in less than $0.1\%$ of the samples used in the simulation study.
For $n\in\{500,1000\}$, the BK estimator significantly overestimated conditional $qDI$, which can be seen in Figures \ref{fig:mse_heatmap} and \ref{fig:errors_500_5_1_5}.
Thus, any of the estimators basing on noncrossing quantile regression lines is better than the reference method of Bassett and Koenker \cite{Bassett1982}.

\begin{figure}[H]
	\centering
	\includegraphics[width=\textwidth]{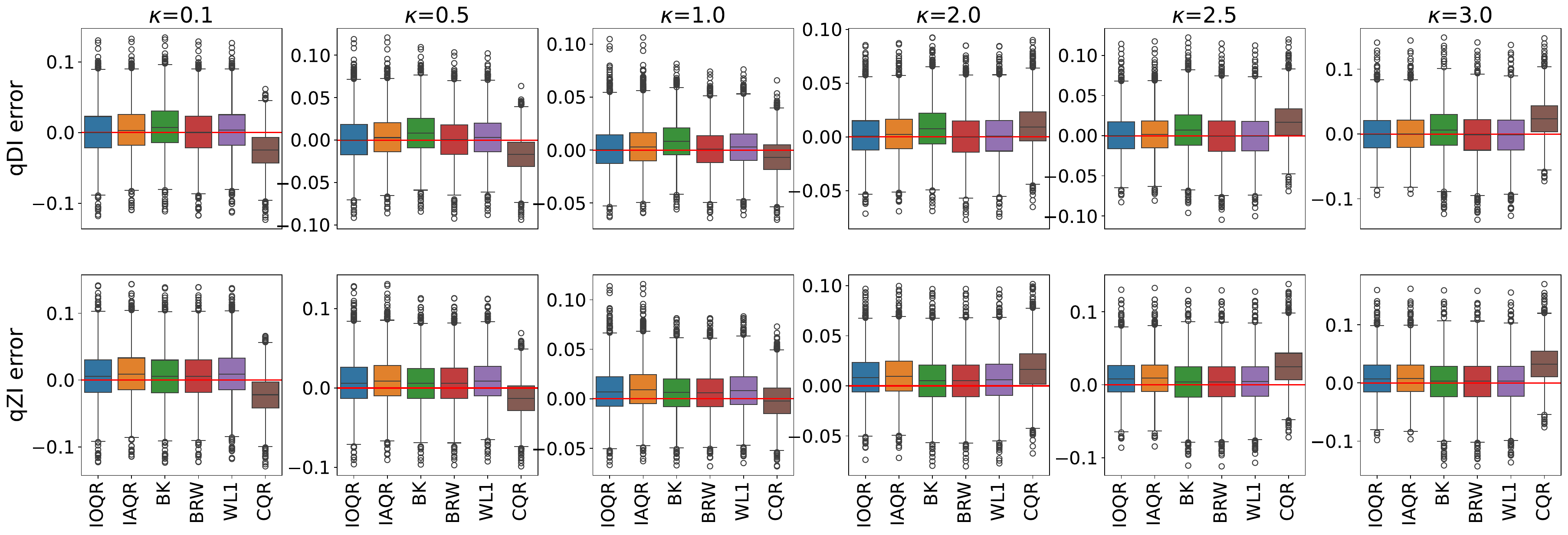}
	\caption{Errors of the estimators of indices $qZI$ and $qDI$ for $n=100$, $\alpha=0.5, \beta=0.1$ and $\kappa$ ranging between 0.1 and 3}
	\label{fig:errors_100_5_1_1}
\end{figure}

\begin{figure}[H]
	\centering
	\includegraphics[width=\textwidth]{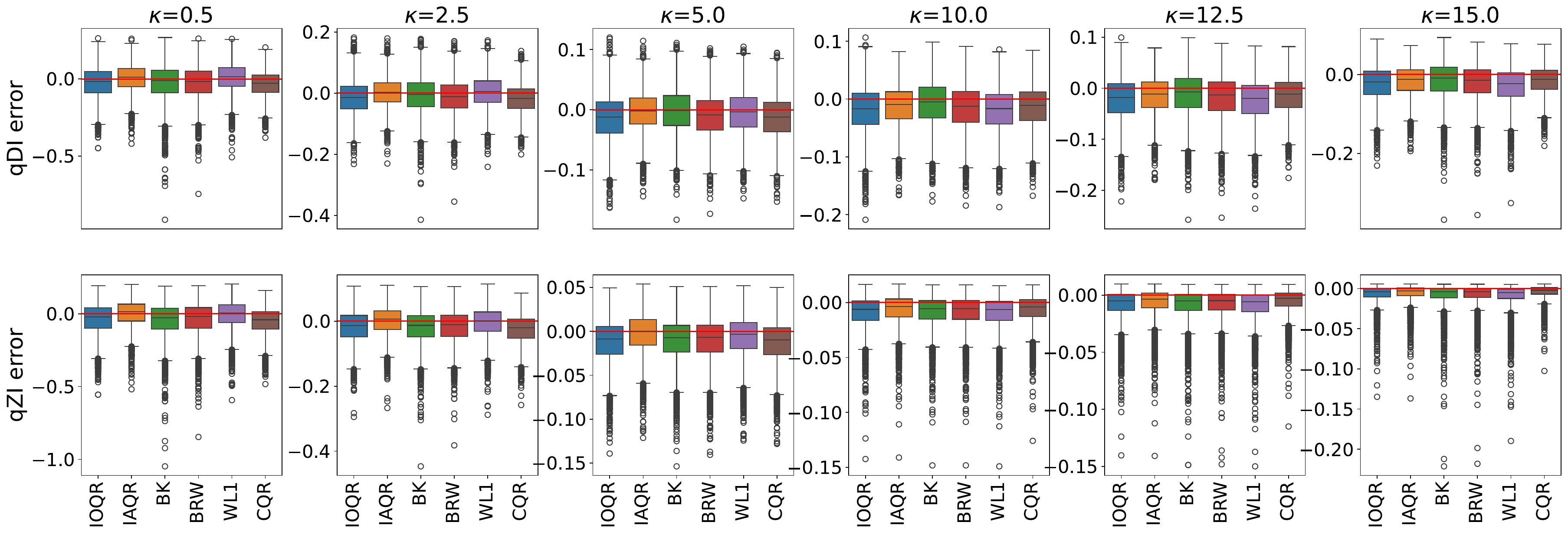}
	\caption{Errors of the estimators of indices $qZI$ and $qDI$ for $n=50$, $\alpha=0.5, \beta=0.5$ and $\kappa$ ranging between 0.5 and 15}
	\label{fig:errors_50_5_5_5}
\end{figure}

\begin{figure}[H]
	\centering
	\includegraphics[width=\textwidth]{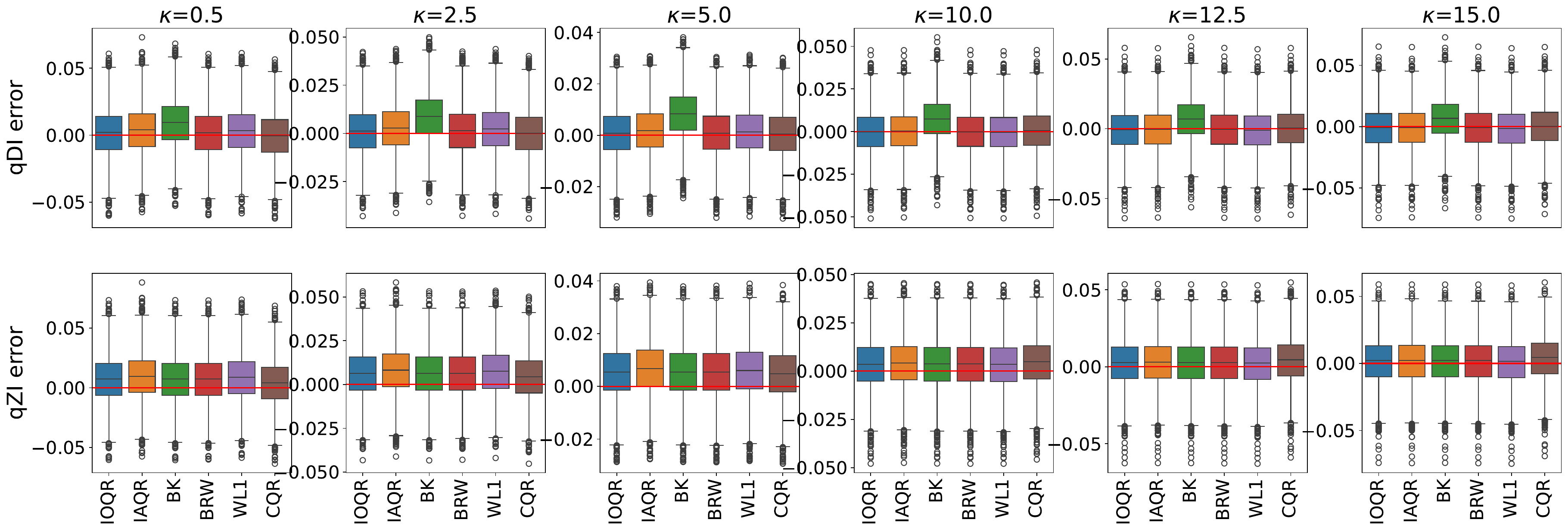}
	\caption{Errors of the estimators of indices $qZI$ and $qDI$ for $n=500$, $\alpha=0.5, \beta=0.1$ and $\kappa$ ranging between 0.5 and 15}
	\label{fig:errors_500_5_1_5}
\end{figure}

\section{Salary data analysis}\label{sec:real_data}
\begin{figure}
    \centering
    \includegraphics[width=\textwidth]{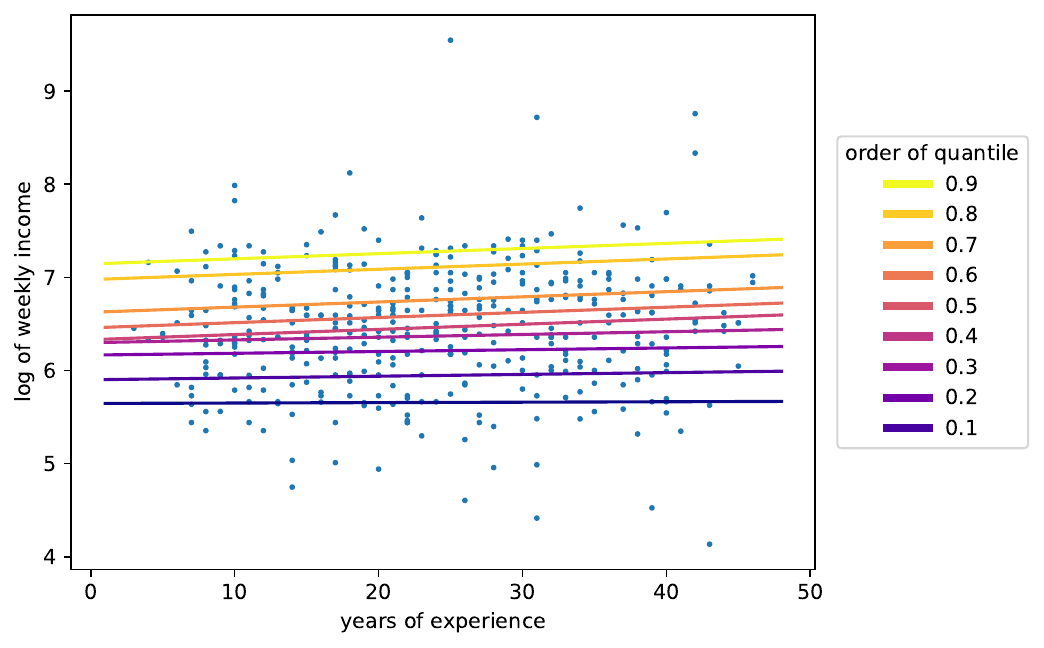}
    \caption{Plot of the logarithms of hourly wage of employees from Oklahoma with different working experience. The lines indicate 9 conditional quantiles of orders $(0.1, 0.2, \ldots, 0.9)$ estimated with IAQR method.
    }
    \label{fig:log_weekly_wage_quantiles_OK}
\end{figure}
An analysis of the data set with years of experience and the hourly salary of workers is described below. The original data set contains 29,501 records \cite{census2000}, but restricting only to records from Oklahoma, we obtain a~subset of size 407.
We assume a~linear relation between the years of experience (\textit{exper}) and logarithm of hourly wage (\textit{lweekinc}). 
He and Zhu \cite{He2003} proposed a~lack-of-fit test which can be applied for both linear and nonlinear quantile regression. 
This test is implemented in \textit{Qtools} library in R. 
However, for high-dimensional datasets, it is recommended to use a~test proposed by Dong et al. \cite{Dong2019}.
Since the test described by He and Zhu suggested that there is no reason to reject the linearity hypothesis for any of the 99 quantiles, we can proceed further with the assumption of a~linear model.

In Section \ref{sec:properties}, it was assumed that $X$ follows an absolutely continuous distribution. 
In this analysis, $X$ represents the age, which is a~discrete random variable. 
However, the methods described in this paper remain useful in this setting, and loosening this assumption does not substantially affect its practical application. 
Moreover, many asymptotic results derived for the continuous case often serve as reasonable approximations for discrete variables, especially when the range of $X$ is large, which is the case here. 
Therefore, it is justified to apply the IOQR or IAQR estimators of conditional inequality measures in this case. 

The IAQR was used to estimate the coefficients $\beta_0$ and $\beta_1$ for 99 quantiles, which were later used to estimate $qZI$ and $qDI$ conditioned on the years of experience of the surveyed workers.
Figure \ref{fig:log_weekly_wage_quantiles_OK} presents the scatterplot of \textit{lweekinc} and \textit{exper} with 9 estimated conditional quantiles of orders $(0.1, 0.2, \ldots, 0.9)$.
It can be seen there that the quantiles are estimated in such a~way that the quantile-crossing is prevented, since the slope of the lines is higher for higher quantiles.

The conditional $qZI$ (left panel) and $qDI$ (right panel) of the hourly wage depending on the experience is presented in Figure \ref{fig:four_states_qZI_qDI} with the lines
for four different states of the USA: Maryland, Oklahoma, Oregon, and Tennessee. The choice of these states was made to ensure the linearity of approximately 95\% of the conditional quantiles and to have sample sizes close to 500 for each state.

It can be seen that the choice of the inequality measure matters, since the analysis of $qZI$ may lead to slightly different conclusions than the analysis of $qDI$.
Conditional $qZI$ in Oklahoma and Oregon has similar values, although for workers with less than 25 years of experience, it is slightly higher in Oregon, while for those with more than 25 it is slightly higher in Oklahoma.
In contrast, $qDI$ is higher in Oregon than in Oklahoma in the whole range of tenure enquired in this analysis.
Both conditional $qZI$ and $qDI$ vary much more in Maryland and Tennessee than in Oklahoma and Oregon.
All eight curves are increasing, although the growth of $qDI$ of Oregon inequalities is barely noticed.
This analysis shows that the character of income inequalities and their relation with the employees experience vary between states.

\begin{figure}[H]
    \centering
    \includegraphics[width=\textwidth]{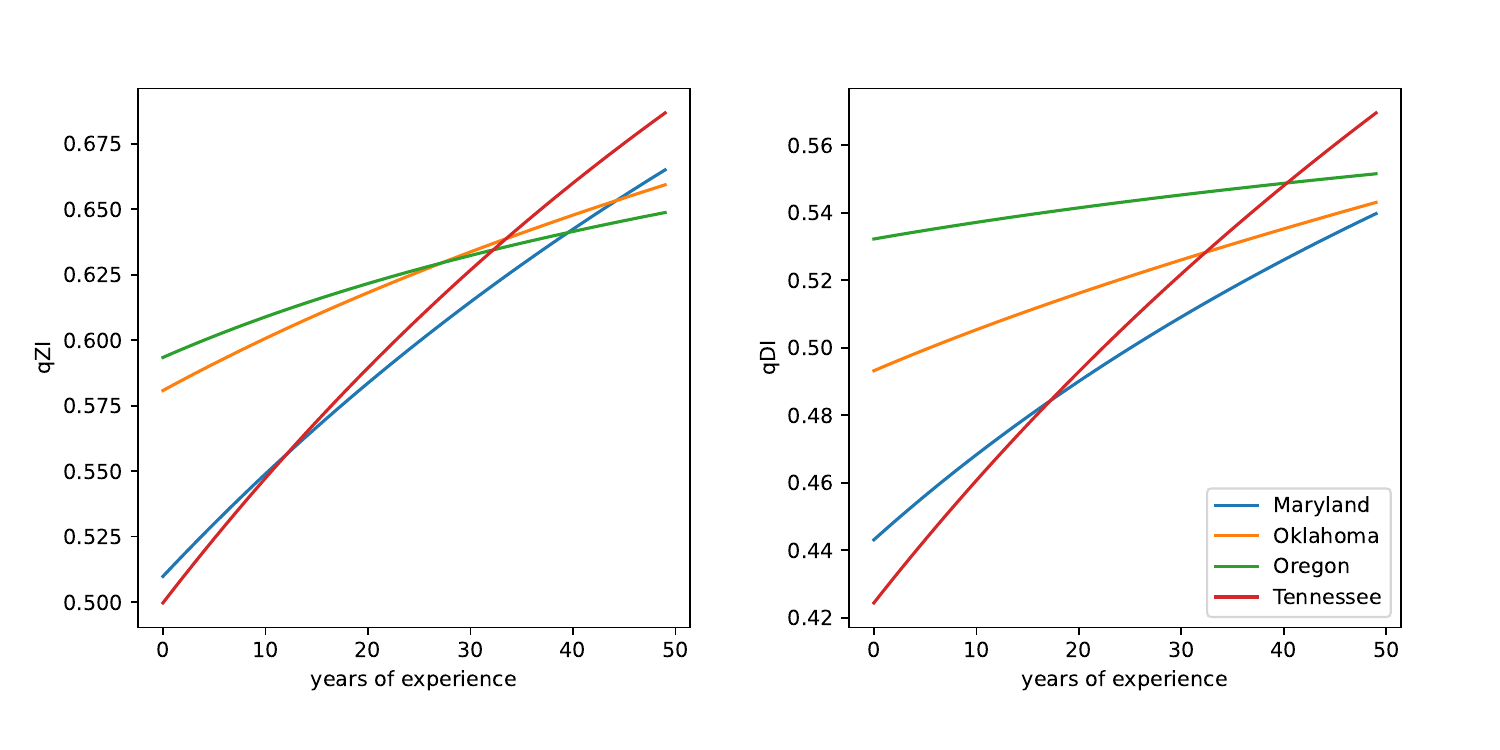}
    \caption{AQR estimators of conditional $qZI$ (left panel) and $qDI$ (right panel) for Maryland, Oklahoma, Oregon and Tennessee}
    \label{fig:four_states_qZI_qDI}
\end{figure}

\section{Conclusion}\label{sec:conclusion}
In this article we introduced the idea of measuring conditional inequality measures conditioned on the values of some continuous covariates, which also enables studying the relation between the covariates and the inequality (concentration) of the exogenous variable.
Moreover, a~new approach employing isotonic regression was applied to the estimators of coefficients to ensure that the quantiles do not cross and the simulations show that it gives better results than the step method using constraints when estimating the coefficients.
Also a~novel approximation to loss function used in quantile regression was proposed, which gives better estimates of the coefficient in the case of linear quantile regression than the ordinary quantile regression using nonsmooth check function.
An example dataset with incomes from several US states was analysed, depicting how the income inequalities (gauged using the novel concept of conditional inequality measures) within the groups of employees with certain experience depend on the years of their experience.
In the case where the linearity (of quantiles) assumption is violated, conditional inequality measures can be estimated similarly with nonlinear (and noncrossing) quantile regression, for example, using the B-spline method (see, for example, \cite{Bondell2010}) or with kernel estimation \cite{Wu2009}, with a~coordinate descent algorithm based on B-splines and the total variation penalty \cite{Jhong2018} or with a~neural network \cite{Moon2021}.

In a~recent paper, Farcomeni and Geraci \cite{Farcomeni2024} described a~method of estimating the conditional quantile ratios of r.v. directly.
Although this method does not require the assumption of any parametric distribution of $Y$, there is a~need to find a~link function transforming the quantile ratios, which guarantees the linearity of the transformed ratios.
Since in many cases it can be difficult to obtain such a~link function, we do not compare QRR (quantile ratio regression) with the methods described above, which are dedicated to the specific case of linear quantile regression.
However, if the observed random variable follows a~certain distribution (for example, Weibull or lognormal explored by Farcomeni and Geraci \cite{Farcomeni2024}), the QRR is worth considering.

\vspace{1cm}
\noindent
\textbf{Acknowledgement.} R.T. acknowledges the support of Dioscuri program initiated by the Max Planck Society, jointly managed with the National Science Centre (Poland), and mutually funded by the Polish Ministry of Science and Higher Education and the German Federal Ministry of Education and Research.


\end{document}